\theoremstyle{plain}
\newtheorem{theorem}{Theorem}[section] % 按章节编号
\newtheorem{lemma}[theorem]{Lemma}     % 和 theorem 共享编号
\theoremstyle{definition}
\newtheorem{assumption}[theorem]{Assumption}
\title{Decentralized Learning with Dynamically Refined Edge Weights: A Data-Dependent Framework}
\name{Rongxing Du, Hoi-To Wai \thanks{Emails:~\url{rxdu@se.cuhk.edu.hk}, \url{htwai@se.cuhk.edu.hk}. This research was supported in part by project \#MMT-p5-23 of the Shun Hing Institute of Advanced Engineering, CUHK.}}
\address{Department of System Engineering and Engineering Management \\ The Chinese University of Hong Kong}
\begin{document}
%\ninept
%
\maketitle

% The convergence rate of decentralized algorithms is significantly constrained by communication efficiency. Consequently, improving the efficiency of information transmission constitutes a challenging research topic. Addressing the issue, this paper 
%
\begin{abstract}
This paper aims to accelerate decentralized optimization by strategically designing the edge weights used in the agent-to-agent message exchanges. We propose a Dynamic Directed Decentralized Gradient ({\algname}) framework and show that the proposed data-dependent framework is a practical alternative to the classical directed DGD (Di-DGD) algorithm for learning on directed graphs. To obtain a strategy for edge weights refinement, we derive a design function inspired by the cost-to-go function in a new convergence analysis for Di-DGD. This results in a data-dependent dynamical design for the edge weights. A fully decentralized version of {\algname} is developed such that each agent refines its communication strategy using only neighbor's information. Numerical experiments show that {\algname} accelerates convergence towards stationary solution by 30-40\% over Di-DGD, and learns edge weights that adapt to data similarity.
\end{abstract}
\begin{keywords}
decentralized learning, topology optimization, directed graph, strategic communication, data dependency
\end{keywords}
\section{Introduction}
\label{sec:intro}
Decentralized learning has become popular for handling recent challenges from data sciences as it provides a scalable and privacy-enhancing solution through collaborative learning \cite{nedic2020distributed, chang2020distributed, Survey_of_FL}. The paradigm has proven to be essential for applications in signal processing and machine learning such as wireless sensor networks \cite{Ramesh2021mobilesensornetwork}, array signal processing \cite{Chen2021arraysignal}, and large language model inference \cite{du2025meta}.

This paper is concerned with decentralized learning over $n$ agents, where the underlying communication is constrained by a directed graph $\mathcal{G = (V,E)}$ with ${\cal V} = \{1,\ldots,n\}$. Our goal is to tackle:
\begin{equation}
\label{P1} \textstyle
    \min_{\prm \in \mathbb{R}^d } ~F(\prm) := \frac{1}{n} \sum_{i=1}^{n} f_i(\prm),
\end{equation}
where the $i$th objective function $f_i: \mathbb{R}^d \to \mathbb{R}$ is continuously differentiable and represents the private data held by agent $i$. 

The study of decentralized optimization algorithm for \eqref{P1}, as pioneered by the decentralized gradient (DGD) method in \cite{nedic2009distributed}, has attracted considerable attention as these algorithms enable optimization of \eqref{P1} without a central server, thus enhancing resiliency and flexibility. Recent works have either extended the analysis of DGD with tighter bounds \cite{spectral_gap, koloskova2020unified, yuan2023removing, zeng2018nonconvex}, or extended DGD to work under various settings, including gradient tracking \cite{qu2017harnessing, shi2015extra}, directed graphs \cite{nedic2014distributed, ACC_paper, Xi-row, Push-Pull, assran2019stochastic}, time varying graphs \cite{yau2024fully, Achieving-geometric-convergence}, etc. 
Most of the above works require the algorithms to work with a weighted adjacency matrix that assign {\it fixed} weights to the (directed or undirected) edges, or the algorithms work with random edge weights from a {\it fixed} distribution. Importantly, the spectral property of such weighted adjacency matrix determines the efficacy of the algorithm.

This paper concentrates on refining the edge weights to improve the communication efficiency of decentralized learning. We attempt the less-studied research question -- \emph{should the edge weights be data-dependent to accelerate the convergence of decentralized learning?} Intuitively, the answer is positive as agent shall focus on communicating with neighbors who hold data that are different from his/her own. To this end, we propose a dynamic edge weight refinement strategy, called dynamic Di-DGD ({\algname}), that seeks to optimize the edge weights simultaneously with the decentralized algorithm, while taking clues from iterates of the latter. Note we focus on the directed graph setting that enables decentralized edge weight refinement. 

Our framework is developed from the directed DGD algorithm (Di-DGD) in \cite{ACC_paper}. The contributions are summarized as:
\begin{itemize}[leftmargin=*]
    \item We propose the {\algname} framework which dynamically refines the adjacency matrix's weights {\it simultaneously} with the Di-DGD iterations. The update strategy is justified from a design function inspired by the analysis on Di-DGD, which involves the latest edge weights, iterates, and data.
    \item As a side contribution, we established a new finite-time convergence analysis for Di-DGD that is tailor made for smooth (but possibly non-convex) objective function, such as \eqref{P1}. We note that this yields a more general bound than the results in \cite{ACC_paper}. 
    \item We show that {\algname} admits a fully decentralized implementation. In this implementation scheme, each agent independently performs coordinate-wise updates of the adjacency matrix weights, leveraging only information from local and 1-hop neighbor. 
\end{itemize}
Finally, we conduct numerical experiments to compare the performance of the proposed {\algname} with Di-DGD. We show that the proposed algorithm has improved efficacy over Di-DGD and finds meaningful edge weights in a stylized problem setup.

\vspace{0.2cm}
\noindent
{\bf Related Works.} 
Our work is closely related to recent efforts on simultaneously learning graph topology and decentralized optimization, yet the latter have various limitations. For instance, \cite{le2023refined} considered undirected graph and proposed to learn a set of sparsified edge weights using Frank-Wolfe algorithm, \cite{shah2024adaptive} considered undirected graph with a stochastic network pruning strategy while taking care of the interplay between network topology and data heterogeneity. These approaches rely on a centralized, offline pre-processing step for edge weight refinement. The latter is not considered in this paper where we seek a decentralized and dynamical refinement scheme.

% \htwai{random thought: may mention that one benefit of directed DGD is that the design of the adjacency matrix can be done `independently'?}
\vspace{-.1cm}
\section{Preliminaries}\vspace{-.1cm}
This section introduces the decentralized gradient descent (DGD) method for directed graph (Di-DGD) \cite{ACC_paper} that forms the backbone of our strategic communication based algorithm. We also present a new convergence analysis for Di-DGD and highlight the challenges on how to achieve fast convergence with the algorithm. 

Consider the directed graph ${\cal G} = ( {\cal V}, {\cal E} )$ with $n$ agents represented by ${\cal V} = \{ 1, \ldots, n \}$ such that $(i,j) \in {\cal E}$ represents a directed edge from $i$ to $j$. The edge set also contains self loops such that $(i,i) \in {\cal E}$ for all $i \in {\cal V}$. We set ${\cal N}_i$ as the in-neighborhood set of $i$, where an edge to $i$ is present. The graph is endowed with a (possibly asymmetric) row-stochastic adjacency matrix ${\bm A} \in {\cal A}_{\cal G}$. We denote the feasible set of adjacency matrices 
\beq 
{\cal A}_{\cal G} = \{ {\bm A} \in \RR_+^{n \times n} : {\bm A} {\bf 1} = {\bf 1}, ~A_{ij} = 0,~(j,i) \notin {\cal E} \}. 
\eeq 
Consider the following standard assumption on ${\cal G}$, i.e.,
\begin{assumption} \label{assu:strong_connect}
    The graph ${\cal G}$ is strongly connected and aperiodic, and its associated adjacency matrix ${\bm A} \in {\cal A}_{\cal G}$ is irreducible.
\end{assumption}
\noindent Given a strongly connected and aperiodic ${\cal G}$, an irreducible matrix ${\bm A} \in {\cal A}_{\cal G}$ can be constructed by setting $A_{ij} = 1 / ( |{\cal N}_i| + 1 )$ for any $(j,i) \in {\cal E}$.
Assumption \ref{assu:strong_connect} implies that there exists a left Perron eigenvector, $\bm{\pi}_{\bm A} \in \RR_{++}^n$, that satisfies $\bm{\pi}_{\bm A}^\top {\bm A} = \bm{\pi}_{\bm A}^\top$ and $\bm{\pi}^\top {\bf 1} = 1$ \cite{Horn_Johnson_1985}. In particular, we observe the limit $ \lim_{k \to \infty} ( {\bm A} )^k = {\bf 1} \bm{\pi}_{\bm A}^\top$ and there exists a spectral gap constant $\rho_{\bm A} \in (0,1]$ such that 
\beq \label{eq:spectral}
\lambda_{\max} ( {\bm A} - {\bf 1} \bm{\pi}_{\bm A}^\top ) \le 1 - \rho_{\bm A} < 1.
\eeq 
Note that if ${\bm A}$ is doubly stochastic (e.g., if ${\cal G}$ is undirected), then $\bm{\pi}_{\bm A} = (1/n) {\bf 1}$ and the limit evaluates to $\lim_{k \to \infty} ( {\bm A} )^k = (1/n) {\bf 1} {\bf 1}^\top$. In other words, the repeated actions of ${\bm A}$ will compute the exact average in the latter case.

The above discussions highlighted a key challenge for decentralized learning on directed graphs. Herein, performing average consensus with ${\bm A} \in {\cal A}_{\cal G}$ is non-trivial as the repeated action of ${\bm A}$ only results in a \emph{weighted average} as $\bm{\pi}_{\bm A} \neq (1/n) {\bf 1}$ in general. To this end, \cite{ACC_paper} proposed the Di-DGD algorithm: for any initialization $(\prm_i^0, {\bm y}_i^0)_{i=1}^n$ with ${\bm y}_i^0 > {\bm 0}$, for any $k \geq 0$,
\begin{subequations}
\label{strategy:1}
    \begin{align}
    \label{iter:prm}
    \prm_i^{k+1} & = \textstyle \sum_{j\in \mathcal{N}_i} A_{ij} \prm_j^k - \gamma_k ( y_{i,i}^k n )^{-1} \nabla f_i(\prm_i^k), \\
    \label{iter:y_k}
    {\bm y}_i^{k+1} & = \textstyle \sum_{j\in \mathcal{N}_i} A_{ij} {\bm y}_j^k,
\end{align}
\end{subequations}
where $y_{i,i}^k$ denotes the $i$th element of the $n$-dimensional vector ${\bm y}_i^k$ and $\gamma_k > 0$ is the $k$th stepsize. While \eqref{iter:prm} resembles the `consensus \& update' paradigm in classical DGD \cite{nedic2009distributed}, it utilizes a weighted local gradient via the rescaling factor $( y_{i,i}^k n )^{-1}$. From \eqref{iter:y_k}, the latter factor approximates $( {\pi}_i({\bm A}) \, n )^{-1}$ which normalizes the local gradient $\nabla f_i(\prm_i^k)$ prior to taking the weighted average by ${\bm A}$.
To highlight the dependence on ${\bm A}$, we will denote the recursion in \eqref{strategy:1} by 
\beq 
( \Prm^{k+1} , {\bm Y}^{k+1} ) = {\sf DiDGD}( \Prm^k, {\bm Y}^k ; {\bm A}, \gamma_k ),
\eeq 
where $\Prm^k := \big[ \prm_1^k , \ldots, \prm_n^k \big]^\top$ and ${\bm Y}^k := [ {\bm y}_1^k, \ldots, {\bm y}_n^k ]^\top$.

We state the convergence results of Di-DGD \eqref{strategy:1} for tackling \eqref{P1}. Our analysis follows from a non-trivial extension of \cite{Xi-row} to the non-convex setting with Di-DGD, and provides comparable rate to that in \cite{assran2019stochastic}. We consider the following set of assumptions:
\begin{assumption}
    \label{assu:L_Lip}
    For $i\in\{1,2,\dots,n\}$, there exists $L$ such that
    \begin{equation*}
        \| \nabla f_i(\prm_1) - \nabla f_i(\prm_2) \| \leq  L \| \prm_1 - \prm_2 \|, \quad \forall ~ \prm_1, \prm_2 \in \mathbb{R}^d.
    \end{equation*}
\end{assumption}
\begin{assumption}
\label{assu:bdd_grad}
   For $i\in\{1,2,\dots,n\}$, there exists $G$ such that 
   \begin{equation*}
       \| \nabla f_i(\prm) \| \leq G < \infty, \quad \forall ~ \prm \in \mathbb{R}^d.
   \end{equation*}
\end{assumption}
\begin{assumption}
    \label{assu:data_hetero}
    For $i\in\{1,2,\dots,n\}$, there exists $\varsigma$ such that
    \begin{equation*}
        \| \nabla F(\prm) - \nabla f_i(\prm) \| \leq \varsigma < \infty, \quad \forall ~ \prm \in \mathbb{R}^d.
    \end{equation*}
\end{assumption}
\noindent Assumption \ref{assu:L_Lip} corresponds to the standard smoothness condition and Assumption \ref{assu:bdd_grad} ensures the boundedness of gradients, while Assumption \ref{assu:data_hetero} quantifies the heterogeneity of local data held by different agents. These are standard assumptions in the literature of non-convex decentralized optimization, see \cite{chang2020distributed}.

We establish the following convergence result for Di-DGD \eqref{strategy:1} on non-convex problems:
\begin{theorem}\label{theorem:DiDGD}
Under Assumptions \ref{assu:strong_connect}-\ref{assu:data_hetero}. Define $C_{\pi 1} = \sum_{i=1}^n (1-\pi_i)^2 + (n-1) \pi_i^2 $, $C_{\pi 2} = \sum_{i=1}^n \frac{1}{\pi^{2}_i}$.
% , and let $C_0$ be a constant. 
Let $\gamma_k \leq \frac{n\rho_{\bm A}}{2L\sqrt{C_{\pi1}(C_{\pi2}+C_0)}}$ for some constant $C_0$. Then, for any $T \geq 1$, the iterates generated by the  Di-DGD algorithm \eqref{strategy:1} satisfy
\begin{equation}
\begin{split}
& \textstyle \min_{k=0, \ldots, T-1} \frac{1}{n} \sum_{i=1}^n \| \nabla F(\prm_i^k) \|^2 \\
& = {\cal O} \left( \frac{ F( \Hprm^0 ) - F(\Hprm^T)  + \frac{\varsigma^2 C_{\pi 1} C_{\pi 2}}{n^2\rho_{\bm A}^2} \sum_{k=0}^{T-1} \gamma_k^3 }{ \sum_{k=0}^{T-1} \gamma_k } \right) ,
\end{split}
\end{equation}
where $\Hprm^k = \sum_{i=1}^n \pi_{i, {\bm A}} \prm_i^k$ is the weighted average of $\{ \prm_i^k \}_{i=1}^n$.
\end{theorem}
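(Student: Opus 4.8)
\medskip\noindent\textbf{Proof sketch.}
The plan is to track three coupled quantities: the $\bm{\pi}_{\bm A}$-weighted average $\Hprm^k$, the consensus error $\Omega_k := \sum_{i=1}^n \| \prm_i^k - \Hprm^k \|^2$, and the Perron-approximation error $\delta_k := \sum_{i=1}^n ( y_{i,i}^k - \pi_i )^2$ of the auxiliary sequence. Write \eqref{strategy:1} as $\Prm^{k+1} = {\bm A}\Prm^k - \gamma_k {\bm D}^k \nabla{\bm F}(\Prm^k)$ with ${\bm D}^k := {\rm diag}\big( (y_{1,1}^k n)^{-1}, \ldots, (y_{n,n}^k n)^{-1} \big)$ and $\nabla{\bm F}(\Prm^k)$ stacking the rows $\nabla f_i(\prm_i^k)^\top$. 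A direct computation from \eqref{iter:y_k} shows $y_{i,i}^k \to \pi_i$ at the geometric rate $1-\rho_{\bm A}$ and $\inf_k y_{i,i}^k \ge c > 0$ (from the self-loops and Assumption~\ref{assu:strong_connect}); this lower bound is what keeps $\|{\bm D}^k\|$ and ${\bm g}^k := \sum_i \pi_i (y_{i,i}^k n)^{-1}\nabla f_i(\prm_i^k)$ uniformly bounded. \emph{Step 1 (descent on the weighted average).} I would left-multiply \eqref{iter:prm} by $\bm{\pi}_{\bm A}^\top$ and use $\bm{\pi}_{\bm A}^\top {\bm A} = \bm{\pi}_{\bm A}^\top$ to get $\Hprm^{k+1} = \Hprm^k - \gamma_k {\bm g}^k$, then decompose ${\bm g}^k = \nabla F(\Hprm^k) + {\bm e}^k$, where ${\bm e}^k$ splits into a smoothness part $\tfrac1n\sum_i ( \nabla f_i(\prm_i^k) - \nabla f_i(\Hprm^k) )$, of norm $\le \tfrac{L}{\sqrt n}\sqrt{\Omega_k}$ by Assumption~\ref{assu:L_Lip}, and a rescaling part $\sum_i ( \pi_i(y_{i,i}^k n)^{-1} - \tfrac1n )\nabla f_i(\prm_i^k)$, of norm $\le \tfrac{G}{c\sqrt n}\sqrt{\delta_k}$ by Assumption~\ref{assu:bdd_grad}. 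Applying the $L$-smoothness descent lemma to $F$ along $\Hprm^{k+1} = \Hprm^k - \gamma_k(\nabla F(\Hprm^k) + {\bm e}^k)$, together with $\langle a, a+e\rangle \ge \tfrac12\|a\|^2 - \tfrac12\|e\|^2$, yields
\[ F(\Hprm^{k+1}) \le F(\Hprm^k) - \tfrac{\gamma_k}{2}\| \nabla F(\Hprm^k)\|^2 + \tfrac{\gamma_k}{2}\|{\bm e}^k\|^2 + \tfrac{L\gamma_k^2}{2}\|{\bm g}^k\|^2 . \]

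\emph{Step 2 (consensus and Perron errors)} is the main obstacle, and the place where the directed topology bites: ${\bm A}$ is row-stochastic but neither symmetric nor doubly stochastic, so \eqref{eq:spectral} is an eigenvalue statement, not a bound on $\|{\bm A} - {\bf 1}\bm{\pi}_{\bm A}^\top\|$, and the $\prm$-iterates need not contract in the Euclidean norm. Following \cite{Xi-row}, I would pass to an ${\bm A}$-adapted norm in which ${\bm A} - {\bf 1}\bm{\pi}_{\bm A}^\top$ contracts by $1-\rho_{\bm A}$, paying a norm-equivalence constant (this is where the slack $C_0$ in the stepsize bound originates). In that norm, subtracting $\Hprm$ gives $({\bm I} - {\bf 1}\bm{\pi}_{\bm A}^\top)\Prm^{k+1} = ({\bm A} - {\bf 1}\bm{\pi}_{\bm A}^\top)({\bm I} - {\bf 1}\bm{\pi}_{\bm A}^\top)\Prm^k - \gamma_k ({\bm I} - {\bf 1}\bm{\pi}_{\bm A}^\top){\bm D}^k\nabla{\bm F}(\Prm^k)$; unrolling this recursion, and splitting each row of the perturbation via $\nabla f_i = \nabla F + (\nabla f_i - \nabla F)$ so that the data-heterogeneity Assumption~\ref{assu:data_hetero} supplies an additive $\varsigma$ while the residual $\nabla F(\prm_i^k) - \nabla F(\prm_j^k)$ terms are reabsorbed into $L\sqrt{\Omega_k}$, produces a bound of the form $\Omega_k = {\cal O}( \tfrac{C_{\pi1}C_{\pi2}}{n^2\rho_{\bm A}^2}\varsigma^2\,\bar\gamma^2 )$, with $\bar\gamma$ a uniform upper bound on $\gamma_k$, once the stepsize condition $\gamma_k \le \tfrac{n\rho_{\bm A}}{2L\sqrt{C_{\pi1}(C_{\pi2}+C_0)}}$ is invoked to dominate the self-referential $\Omega_k$ terms. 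Here $C_{\pi1} = \sum_i \|{\bm e}_i - \bm{\pi}_{\bm A}\|^2 = \delta_0$ (the initial Perron error) and $C_{\pi2}$ is $n^2$ times the squared Frobenius norm of the limiting rescaling matrix ${\rm diag}(1/(\pi_i n))$; a parallel but simpler recursion gives the geometric bound $\delta_k = {\cal O}( C_{\pi1}(1-\rho_{\bm A})^{2k} )$.

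\emph{Step 3 (telescoping).} Summing the Step~1 inequality over $k = 0,\dots,T-1$ and inserting the Step~2 bounds, the term $\tfrac{\gamma_k}{2}\|{\bm e}^k\|^2 \lesssim \gamma_k(\tfrac{L^2}{n}\Omega_k + \tfrac{G^2}{c^2 n}\delta_k)$ contributes ${\cal O}(\gamma_k^3)$ from the consensus part, while $\sum_k \gamma_k\delta_k$ is a constant multiple of $\bar\gamma^3$ by geometric decay; the same stepsize condition lets me absorb $\tfrac{L\gamma_k^2}{2}\|{\bm g}^k\|^2$ into the left-hand side. Dividing by $\sum_{k=0}^{T-1}\gamma_k$ and converting to the per-agent gradient via $\tfrac1n\sum_i\|\nabla F(\prm_i^k)\|^2 \le 2\|\nabla F(\Hprm^k)\|^2 + \tfrac{2L^2}{n}\Omega_k$ (valid since $F$ is $L$-smooth), then using that the minimum over $k$ is dominated by the $\gamma_k$-weighted average, delivers the stated ${\cal O}(\cdot)$ bound. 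As flagged, the difficulty concentrates in Step~2: absent a Euclidean contraction the entire consensus analysis must run in a weighted norm with all constants made explicit in $\rho_{\bm A},\{\pi_i\},n$, and the two error sequences $\Omega_k$ and $\delta_k$ are simultaneously coupled to — and feed back into — the descent on $F(\Hprm^k)$ through the rescaling factors $(y_{i,i}^k n)^{-1}$, which is precisely what makes the passage from \cite{Xi-row} to the non-convex Di-DGD setting non-trivial.
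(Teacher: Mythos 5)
Your skeleton matches the paper's: descend on the $\bm{\pi}_{\bm A}$-weighted average $\Hprm^k$, control the consensus error, use the geometric convergence $Y_{ii}^k\to\pi_i$ (the paper's Lemmas A.1--A.2, which is also where its constant $C_0$ actually comes from -- the transient in $\|(\tilde{\bm Y}^k)^{-1}\|_F^2\le C_{\pi2}+C_0\lambda^k$, not a norm-equivalence constant), and finish by $\frac1n\sum_i\|\nabla F(\prm_i^k)\|^2\le 2\|\nabla F(\Hprm^k)\|^2+\frac{2L^2}{n}\|\Prm^k-\HPrm^k\|_F^2$ and a weighted telescoping. The structural difference is in Step 2: the paper never unrolls the consensus recursion into an a priori bound. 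It derives a one-step contraction
\begin{equation*}
\textstyle\frac1n\|\Prm^{k+1}-\HPrm^{k+1}\|_F^2\le(1-\tfrac{\rho_{\bm A}}{2})\frac1n\|\Prm^k-\HPrm^k\|_F^2+{\cal O}\!\big(\tfrac{C_{\pi1}C_{\pi2}}{n^2\rho_{\bm A}}\gamma_k^2\varsigma^2\big)+{\cal O}\!\big(\tfrac{C_{\pi2}}{n^3\rho_{\bm A}}\gamma_k^2\big)\|\nabla F(\Hprm^k)\|^2,
\end{equation*}
and then couples it to the function decrease through the Lyapunov function $L_k=F(\Hprm^k)+\frac{10\gamma_kL^2}{3n\rho_{\bm A}}\|\Prm^k-\HPrm^k\|_F^2$, with the stepsize condition used to absorb the $\gamma_k^2\|\nabla F(\Hprm^k)\|^2$ feedback into the $-\frac{\gamma_k}{4}\|\nabla F(\Hprm^k)\|^2$ term.

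The gap in your sketch is precisely the term the paper's recursion retains and yours drops. When you split $\nabla f_i(\prm_i^k)$ into heterogeneity ($\varsigma$) and consensus residual ($L\sqrt{\Omega_k}$) pieces, there remains the common component ${\bf 1}\nabla F(\Hprm^k)^\top$, and because of the rescaling $(y_{i,i}^kn)^{-1}$ the vector $({\bm I}-{\bf 1}\bm{\pi}_{\bm A}^\top)(\tilde{\bm Y}^k)^{-1}{\bf 1}$ is \emph{not} zero for a row-stochastic ${\bm A}$ (its norm is of order $\sqrt{C_{\pi2}}$); hence the consensus error is driven not only by $\varsigma$ but also by $\|\nabla F(\Hprm^k)\|$. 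Consequently your claimed uniform bound $\Omega_k={\cal O}\big(\tfrac{C_{\pi1}C_{\pi2}}{n^2\rho_{\bm A}^2}\varsigma^2\bar\gamma^2\big)$ does not hold as stated: unrolling gives $\Omega_k={\cal O}\big(\bar\gamma^2(\varsigma^2+\max_{j<k}\|\nabla F(\Hprm^j)\|^2)\big)$ up to the same constants. Bounding the extra term by $G^2$ via Assumption~\ref{assu:bdd_grad} would salvage a rate but replaces $\varsigma^2\sum_k\gamma_k^3$ by $G^2\sum_k\gamma_k^3$, which is strictly weaker than the theorem (e.g.\ homogeneous data has $\varsigma=0$ but $G$ large); keeping it and absorbing the resulting cross terms $\gamma_k\gamma_j^2\|\nabla F(\Hprm^j)\|^2$ into $\sum_k\gamma_k\|\nabla F(\Hprm^k)\|^2$ is possible but is exactly the coupling argument your sketch defers -- the paper resolves it with the Lyapunov construction above. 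Two minor points: $C_{\pi1}=\sum_i[(1-\pi_i)^2+(n-1)\pi_i^2]=\|{\bm I}-{\bf 1}\bm{\pi}_{\bm A}^\top\|_F^2$ enters through the projection factor, not as your diagonal-only initial Perron error $\delta_0$; and the paper does not pass to an ${\bm A}$-adapted norm at all -- it bounds $\|({\bm A}-{\bf 1}\bm{\pi}_{\bm A}^\top)\Prm^k\|_F$ via the factor $(1-\rho_{\bm A})$ directly (your concern about non-normality is fair, but it is not the mechanism used here, and it is not where $C_0$ originates).
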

\noindent 
The proof can be found in the {\href{https://www1.se.cuhk.edu.hk/~htwai/pdf/icassp26-d3gd.pdf}{Online Appendix}.
Notice that $\frac{1}{n} \sum_{i=1}^n \| \nabla F(\prm_i^k) \|^2$ measures the average stationarity level evaluated on $F(\cdot)$ for each local variable at the $k$th iteration.
By setting $\gamma_k = \gamma = {\cal O}(1/T^{1/3})$, we obtain a rate of ${\cal O}(1/T^{2/3})$ for the convergence to stationary point of \eqref{P1}. Lastly, we remark that this rate is comparable to that of DGD over undirected graphs \cite{nedic2009distributed} and is faster than that of \cite{assran2019stochastic} due to the use of exact gradients.

\section{Dynamic Di-DGD Algorithm}
\subsection{Centralized Dynamic Algorithm}
An important observation from Theorem \ref{theorem:DiDGD} is that the convergence rate of Di-DGD depends on the spectral gap $\rho_{\bm A}$ and the heterogeneity constant $\varsigma$. A small $\rho_{\bm A}$ (e.g., due to a poorly designed ${\bm A}$) or a large $\varsigma$ (e.g., due to highly heterogeneous local data) will lead to slow convergence. This motivates us to develop a strategic communication framework to accelerate the convergence of Di-DGD \eqref{strategy:1} by dynamically adjusting the adjacency matrix ${\bm A} \in {\cal A}_{\cal G}$.
In particular, this section proposes the \emph{dynamical Di-DGD} ({\algname}) framework which is based upon Di-DGD, but will adapt the topology dynamically according to similarities between gradients of agents. 

Our primary idea is to identify a cost-to-go function for Di-DGD \eqref{strategy:1} and extract the component that can be optimized with respect to (w.r.t.) the weighted adjacency matrix ${\bm A}$. 
We focus on finding the strategy, i.e., a refined weighted adjacency matrix ${\bm A}^k$, at iteration $k$ such that the iterate $\Prm^k$ is fixed. At the first glance, it may be natural to consider the average objective value $\frac{1}{n} \sum_{i=1}^n F( \prm_i^k ) = \frac{1}{n^2} \sum_{i,j=1}^n f_j( \prm_i^k )$. However, the latter may not yield a monotonic sequence w.r.t.~$k$, which will make it challenging to analyze. 

As a remedy, we are inspired by the proof of Theorem \ref{theorem:DiDGD} (see  {\href{https://www1.se.cuhk.edu.hk/~htwai/pdf/icassp26-d3gd.pdf}{Online Appendix}) to concentrate on the following Lyapunov function as the cost-to-go function. Let $\Hprm^k = \sum_{i=1}^n \pi_{i, {\bm A}} \prm_i^k$, we set
\beq \textstyle
L_k := F( \Hprm^k ) + \frac{ 3 \gamma_k L^2 }{ n\, \rho } \sum_{i=1}^n \| \prm_i^k - \Hprm^k \|^2.
\eeq 
Define the notations $\grd {\bm F}^k := \big[ \grd f_1(\prm_1^k) , \ldots, \grd f_n(\prm_n^k) \big]^\top$, $\HPrm^k := {\bf 1} \bm{\pi}_{\bm A}^\top \Prm^k$, $\widetilde{\bm Y}^k := {\rm Diag}( {\bm Y}^k )$.
Using Assumptions \ref{assu:strong_connect}-\ref{assu:data_hetero}, it can be proven that (see {\href{https://www1.se.cuhk.edu.hk/~htwai/pdf/icassp26-d3gd.pdf}{Online Appendix})
\begin{align}
L_{k+1} & \leq L_k - \frac{\gamma_k}{4} \| \grd F( \Hprm^k ) \|^2 - \frac{3 \gamma_k L^2 (2-\rho)}{n \, \rho} \| \Prm^k - \HPrm^k \|_F^2 \notag \\
& \quad + {\cal O}( \gamma_k^3 ) + \frac{3 \gamma_k L^2}{n \, \rho} \| \Prm^{k+1} - \HPrm^{k+1} \|_F^2 . \label{eq:1stLya}
\end{align}
Our goal is not to prove the convergence of Di-DGD, but to \emph{construct a design function that shall guide the optimization of ${\bm A}$ for faster Di-DGD convergence}. 
As such, instead of attempting at the convergence of $L_k$, we focus on {designing ${\bm A}$ to be used at the $k$th iteration such that $L_{k+1}$ is minimized.} 
To this end, we notice that as $\Prm^k$ is fixed prior to iteration $k$, only the last term on the right hand side of \eqref{eq:1stLya} depends on ${\bm A}$. Expanding the latter yields:
\begin{align}
& \| \Prm^{k+1} - \HPrm^{k+1} \|_F^2 = \| ({\bm I} - {\bf 1} \bm{\pi}_{\bm A}^\top) ( {\bm A} \Prm^k - {\textstyle \frac{\gamma_k}{n}} (\widetilde{\bm Y}^k)^{-1} \nabla {\bm F}^k)  \|_F^2 \notag \\
& = \frac{\gamma_k^2}{n^2} \| (\tilde{\bm Y}^k)^{-1} ( {\bm I} - {\bf 1} \bm{\pi}_{\bm A}^\top ) \nabla {\bm F}^k \|_F^2 + \| ( {\bm A} - {\bf 1} \bm{\pi}_{\bm A}^\top  ) \Prm^k \|_F^2  \notag \\ 
& \quad - \frac{2\gamma_k}{n} \langle  ( {\bm A} - {\bf 1}_n \bm{\pi}_{\bm A}^\top ) \Prm^k ,   ( {\bm I} - {\bf 1} \bm{\pi}_{\bm A}^\top ) (\tilde{\bm Y}^k)^{-1} \nabla {\bm F}^k) \rangle.
\end{align}
Ignoring the first ${\cal O}( \gamma_k^2 )$ term and fixing $\bm{\pi}_{\bm A} = \bm{\pi}_{\bm A^k}$ leads to the design function for ${\bm A}$ at the $k$th iteration:
\beq 
\begin{split}
\label{eq:design function of Jk}
& J_k( {\bm A} ; \Prm^k ) := \| ( {\bm A} - {\bf 1} \bm{\pi}_{\bm A^k}^\top  ) \Prm^k \|_F^2 \\
& - \frac{2\gamma_k}{n} \langle  ( {\bm A} - {\bf 1}_n \bm{\pi}_{\bm A^k}^\top ) \Prm^k ,  ( {\bm I} - {\bf 1} \bm{\pi}_{\bm A^k}^\top ) (\tilde{\bm Y}^k)^{-1} \nabla {\bm F}^k) \rangle.
\end{split}
\eeq 
As anticipated, the design function depends on $\Prm^k$ and the similarities between the set of local gradients $\{ \grd f_i( \prm_i^k ) \}_{i=1}^n$. 

Minimizing the design function $J_k(\cdot)$ enables us to maximize the decrease in the cost-to-go function $L_{k+1}$, i.e., maximizing the convergence rate at the $k$th iteration. We are motivated to consider the following dynamical variant of Di-DGD:
\beq \label{eq:naivedecay}
\begin{split} 
& {\bm A}^{k+1} = \textstyle  \argmin_{ {\bm A} \in {\cal A}_{\cal G} } J_k( {\bm A} ; \Prm^k ), \\
& ( \Prm^{k+1}, {\bm Y}^{k+1} ) = {\sf DiDGD} ( \Prm^k, {\bm Y}^k ; {\bm A}^{k+1} , \gamma_k ).
\end{split}
\eeq 
However, \eqref{eq:naivedecay} may not work since (i) the optimized ${\bm A}^{k+1}$ may violate the irreducibility assumption in Assumption~\ref{assu:strong_connect} which can invalidate the cost-to-go function $L_k$, (ii) the cost-to-go function $L_k$ will be updated at every iteration as the Perron eigenvector is perturbed due to changes in ${\bm A}^k$, i.e., one has $\bm{\pi}_{\bm A^{k+1}} \neq \bm{\pi}_{\bm A^{k}}$, and (iii) minimizing $J_k(\cdot)$ requires \emph{global information} involving all local variables $\Prm^k, {\bm Y}^k, \grd {\bm F}^k$.

\subsection{Practical D$^3$GD Algorithms}
The proposed \emph{dynamical Di-DGD} ({\algname}) framework is based on the decentralized algorithm in \eqref{eq:naivedecay}. To address the issues mentioned in previous section, we rely on two ingredients to design ${\bm A}^k$ on-the-fly: {\sf (a)} a slow-timescale update for ${\bm A}^k$ to prevent violating  Assumption~\ref{assu:strong_connect} and causing perturbations to $\bm{\pi}_{\bm A^{k}}$, {\sf (b)} a coordinate descent algorithm for optimizing each row of ${\bm A}^k$, that corresponds to the communication strategy of an agent, in a decentralized manner. 

We first concentrate on the satisfaction of Assumption~\ref{assu:strong_connect} by adopting a conservative design for ${\bm A}^k$. We consider
\beq \textstyle \min_{ \overline{\bm A} \in {\cal A}_{\cal G} } \overline{J}_k( \overline{\bm A}; \Prm^k ) := J_k( (1-\delta) \overline{\bm A} + \delta {\bm A}^0 ; \Prm^k ),
\eeq 
with $\delta \in (0,1)$ being a pre-defined weight and ${\bm A}^0 \in {\cal A}_{\cal G}$ is an initial adjacency matrix satisfying Assumption~\ref{assu:strong_connect}.
Subsequently, the convex combination between an optimized $\overline{\bm A}^k$ and the initial ${\bm A}^0$, i.e., ${\bm A}^k = (1-\delta) \overline{\bm A}^k + \delta {\bm A}^0$ must satisfy Assumption~\ref{assu:strong_connect}. 

We then consider a gradient descent scheme for $\overline{\bm A}^k$ to develop a {\it decentralized} mechanism where each agent optimizes his/her own communication strategy. To this end, we notice from \eqref{strategy:1} that knowing the $i$th row of $\overline{\bm A}^k$, denoted as $\overline{\bm a}_i^k$, suffices for agent $i$ to perform the update(s). At iteration $k$, the {\algname} algorithm relies on updating the $i$th row of $\overline{\bm A}^k$ via projected gradient method, given by:
\beq \label{eq:pgd}
\overline{\bm a}_{i}^{k+1} = {\cal P}_{ {\cal A}_{ {\cal G}, i} } \left( \overline{\bm a}_{i}^{k} - \eta_k \grd_{i} \overline{J}_{k} ( \overline{\bm A}^k ; \Prm^k) \right),
\eeq 
where $\eta_k > 0$ is a step size parameter and ${\cal P}_{ {\cal A}_{ {\cal G}, i } } (\cdot)$ denotes the projection onto ${\cal A}_{ {\cal G}, i }$ that corresponds to the $i$th row of the matrix in ${\cal A}_{\cal G}$. We have ${\cal A}_{\cal G} = {\cal A}_{ {\cal G}, 1 } \times \cdots \times {\cal A}_{ {\cal G}, n }$. Note that we also consider coordinate descent update such that \eqref{eq:pgd} is executed for a set of rows $i \in {\cal I}_k$, while we set $\overline{\bm a}_{j}^{k+1} = \overline{\bm a}_{j}^{k}$, $j \notin {\cal I}_k$. 
By judiciously designing the parameters $\eta, \delta$ together with the Di-DGD step size $\gamma$, we ensure a smooth evolution for $\bm{\pi}_{\bm A^{k+1}} \approx \bm{\pi}_{\bm A^{k}}$.

The last issue is that \eqref{eq:pgd} requires {\it global information} not available at the individual agent. Observe that for any $j \in {\cal N}_i$,
\begin{align}
& \big[ \grd_{i} \overline{J}_{k} ( \overline{\bm A} ; \Prm^k) \big]_j  = 2 (\prm_j^k)^\top (\Prm^k)^\top ( (1-\delta) \overline{\bm a}_i + \delta {\bm a}_i^0 - \bm{\pi}_{\bm A^k} ) \notag \\
& \textstyle \quad - \frac{2\gamma_k (1-\delta) }{n} (\prm_j^k)^\top ( \grd {\bm F}^k )^\top ( \tilde{\bm Y}^k )^{-1} ( {\bm e}_i - \bm{\pi}_{\bm A^k} ) \label{eq:grd_J}
\end{align}
In particular, evaluating $(\Prm^k)^\top \bm{\pi}_{\bm A^k}$ and $( \grd {\bm F}^k )^\top ( \tilde{\bm Y}^k )^{-1} \bm{\pi}_{\bm A^k}$ may involve $\prm_{\ell}^k$, $\grd f_\ell( \prm_\ell^k )$ with $\ell \notin {\cal N}_i$.
To this end, our idea is to approximate the latter by constructing two decentralized trackers similar to dynamic consensus \cite{zhu2010discrete}. We have
\begin{align}
{\bm z}_i^{k+1} & \textstyle = \sum_{j=1}^n A_{ij}^k {\bm z}_j^k + \prm_i^{k+1} - \prm_i^k \label{eq:tracker_z} \\
{\bm q}_i^{k+1} & \textstyle = \sum_{j=1}^n A_{ij}^k {\bm q}_j^k + \grd f_i( \prm_i^{k+1} ) - \grd f_i( \prm_i^{k}) \label{eq:tracker_q}
\end{align}
that track $(\Prm^{k+1})^\top \bm{\pi}_{\bm A^k}$ and $( \grd {\bm F}^{k+1} )^\top \bm{\pi}_{\bm A^k}$, respectively. 
Finally, we summarize the fully decentralized {\algname} in Algorithm~\ref{alg:d3gd}.

\begin{algorithm}[t]
\algrenewcommand\algorithmicrequire{\textbf{Input:}}
\algrenewcommand\algorithmicensure{\textbf{Output:}}
\algrenewcommand{\algorithmicindent}{1em}
\caption{Decentralized Dynamic Di-DGD ({\algname}) Algorithm}
\label{alg:d3gd}
\begin{algorithmic}[1]
\Require Initial parameters $\{ \prm_i^0 \}_{i=1}^n$, initial adjacency matrix ${\bm A}^0$, step-sizes $\{\gamma_k, \eta_k\}_{k \geq 0}$, max no.~of iterations $T$.
\State Set ${\bm z}_i^0 = \prm_i^0$, ${\bm q}_i^0 = \grd f_i( \prm_i^0 )$, $i=1,...,n$, and $\overline{\bm A}^0 = {\bm A}^0$.
\For{$k=0,1,2,\ldots,T-1$}
\State \texttt{/* Di-DGD update */}
\State Set ${\bm A}^k = (1-\delta) \overline{\bm A}^k + \delta {\bm A}^0$.
\State $( \Prm^{k+1} , {\bm Y}^{k+1} ) = {\sf DiDGD} ( \Prm^k , {\bm Y}^k ; {\bm A}^k , \gamma_k ) $
\State \texttt{/* Adjacency Matrix update */}
\For{$i=1,\ldots,n$ (in parallel)}
\State \vspace{-.6cm}
\begin{align}
& \hspace{.45cm} \textstyle \forall~j \in {\cal N}_i,~\big[ {\bm g}_i^k \big]_j  
 = \textstyle 2 (\prm_j^k)^\top ( \frac{\gamma_k (1-\delta) }{n Y_{ii}^k} {\bm q}_i^k - {\bm z}_i^k ) \, + \notag \\ 
 & \hspace{.45cm}  \textstyle 2 (\prm_j^k)^\top \big[ (\Prm^k)^\top ( (1-\delta) \overline{\bm a}_i^k + \delta {\bm a}_i^0 ) - \frac{\gamma_k (1-\delta) }{n Y_{ii}^k} \grd f_i( \prm_i^k ) \big] \notag 
\end{align}\vspace{-.4cm}
\State Set $\overline{\bm a}_i^{k+1} = {\cal P}_{ {\cal A}_{ {\cal G}, i} } ( \overline{\bm a}_i^{k} - \eta_k {\bm g}_i^k )$.
\State Update the trackers ${\bm z}_i^k, {\bm q}_i^k$ using \eqref{eq:tracker_z}, \eqref{eq:tracker_q}.
\EndFor
\EndFor
\Ensure Approximate solution $\{ \prm_i^T \}_{i=1}^n$, adjacency matrix ${\bm A}^T$.
\end{algorithmic}
\end{algorithm}

% \begin{remark} \label{rem:convergence}
We conclude with a discussion on the convergence property of {\algname}. Notice that Algorithm~\ref{alg:d3gd} operates on two optimization problems simultaneously: one on Di-DGD \eqref{strategy:1} that aims at tackling \eqref{P1} and one on minimizing the design function $\overline{J}(\cdot)$. Although the two problems are intertwined with each other, we observe that when $\overline{\bm A}$ is unchanged, the Di-DGD algorithm is guaranteed to converge to a stationary solution (cf.~Theorem~\ref{theorem:DiDGD}). Meanwhile, the design function $\overline{J}(\cdot)$ is convex w.r.t.~the adjacency matrix. We thus anticipate {\algname} to converge under a carefully adjusted stepsize pair $(\gamma_k, \eta_k)$.

\section{Numerical Experiments}
This section presents numerical results to validate the efficacy of {\algname} in different settings. Throughout, we focus on the setting when the learning problem \eqref{P1} is a robust $K$-class classifier training problem with synthetic data. Particularly, we take $K=10$ classes and for each $i$, the set of parameters is $\prm = ( \prm^{(k)} )_{k=1}^K$, $\prm^{(k)} \in \RR^d$. Consider the sigmoid loss function
\begin{equation} \notag
    f_i(\prm) = \frac{1}{M} \sum_{m=1}^{M} \sum_{k=1}^{K} \frac{1}{1 + \exp(-y_{mk} (\mathbf{x}_m^\top \prm^{(k)} ))} + \frac{\lambda}{2} \sum_{k=1}^{K} \| \prm^{(k)} \|_2^2.
\end{equation}
We set $\lambda = 10^{-4}$ as a regularization parameter and notice that $f_i(\cdot)$ is smooth but possibly non-convex. Here, agent $i$ has access to $M$ samples of data tuples $( {\bf x}_m, (y_{mk})_{k=1}^{K} )_{m=1}^{M}$ such that ${\bf x}_m \in \RR^d$ is the $m$th feature vector and $y_{mk} = 1$, if $k$ is the respective class label for ${\bf x}_m$, otherwise, $y_{mk} = 0$. 

To generate heterogeneous data across agents, for each $i$, we first sample a class probability vector $\mathbf{p}_i = (p_{i,1}, \dots, p_{i,K})$ with the p.d.f.~$f(\mathbf{p}_i; \alpha) = \frac{\Gamma(K\alpha)}{\Gamma(\alpha)^K} \prod_{k=1}^{K} p_{i,k}^{\alpha - 1}$, where $\sum_{k=1}^K p_{i,k} = 1$ and $\Gamma(\cdot)$ is the Gamma function. A small $\alpha$ (i.e., $ \alpha \approx 0$) leads to sparse probability vectors and thus high degree of data heterogeneity, while a large $\alpha$ (i.e., $\alpha \gg 1$) forces $\mathbf{p}_i$ to be nearly uniform, leading to low degree of heterogeneity. The exact number of samples for each class is obtained by drawing a vector of counts $\mathbf{m}_i$, s.t. $P(\mathbf{m}_i) = \frac{m!}{m_{i,1}! \dots m_{i,K}!} p_{i,1}^{m_{i,1}} \dots p_{i,K}^{m_{i,K}}$, where $\sum_{k=1}^K m_{i,k} = M$. We set $M=100$ to be the total number of samples at each agent. Finally, we generate a unique mean vector $\boldsymbol{\mu}_k \in \mathbb{R}^d$ for each class $k$ by $\boldsymbol{\mu}_k \sim \mathcal{N}(\mathbf{0}, \mathbf{I}_d)$, where $d=10$. For each class $k$, the $m_{i,k}$ feature vectors $\mathbf{x}$ for agent $i$ are then sampled from a normal distribution centered at the corresponding class mean: $\mathbf{x} \sim \mathcal{N}(\boldsymbol{\mu}_k, \mathbf{I}_d)$.

In our experiments, {\algname} is implemented with $\delta = 0.2$ and full participation in updating $\overline{\bm a}_i^k$, i.e., ${\cal I}_k = {\cal V}$. We consider two variants of {\algname}: (i) the plain `{\algname}' utilizes \eqref{eq:pgd} to update $\overline{\bm A}^k$, (ii) `{\it decentralized {\algname}}' refers to the fully decentralized scheme in Algorithm~\ref{alg:d3gd}. We also compared the Di-DGD algorithm with the standard Metropolish-Hasting weight \cite{metropolis_weights}.
% \htwai{how do we determine the weights for ${\bm A}^0$ in the case with directed graphs?}

% First, we compare the convergence performance of the Di-DGD, $\text{D}^3\text{GD}$ and Decentralized $\text{D}^3\text{GD}$ under conditions of high data heterogeneity between agents. Second, given several connected agents, we manually set the heterogeneous data distribution. And we run Decentralized $\text{D}^3\text{GD}$ on the agents; the graph resulting from the optimization of Decentralized $\text{D}^3\text{GD}$ is visualized after the algorithm reaches consensus.

\vspace{.2cm}
\noindent {\bf Convergence of {\algname} on Synthetic Data.}
Our first experiment compares {\algname} to Di-DGD for tackling an instance of \eqref{P1} with $n = 20$ agents, connected via a directed Erdos-Renyi (ER) graph with connectivity of $p=0.6$. The data heterogeneity level is set at $\alpha = 0.1$. We compare the stationarity measure $\frac{1}{n}\sum_{i=1}^n \| \grd F(\prm_i^t) \|^2$ and total disagreement $\frac{1}{n^2} \sum_{i,j} \| \prm_i^k - \prm_j^k \|^2$ against $k$. The results are presented in Fig.~\ref{fig:strategy comparison} where we have set the stepsize parameters as $(\gamma_k,\eta_k) = (0.1,1)$ for {\algname}.
From the figure, we notice that both variants of {\algname} achieve speed-ups of 30-40\% over Di-DGD. Moreover, the fully decentralized {\algname} achieves similar performance as its global information counterpart, and maintains a similar level of speedup over Di-DGD.
The experiment supports our conjecture that the convergence of (directed graph) distributed learning can be accelerated by dynamically refining the graph topology. 

\begin{figure}[t]

% %
\begin{minipage}[b]{.48\linewidth}
  \centering
  \centerline{\includegraphics[width=4.0cm]{./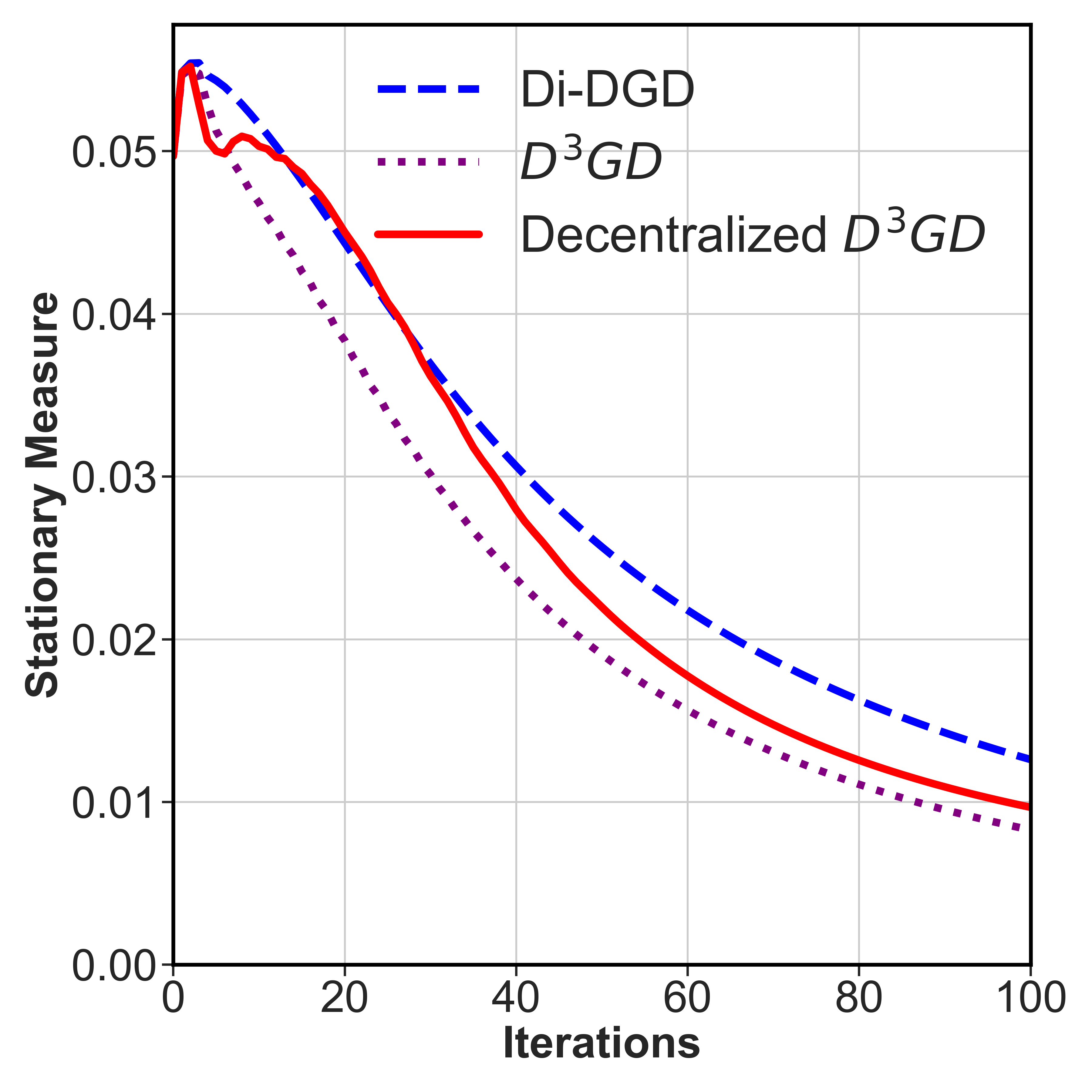}}
%  \vspace{1.5cm}
  
\end{minipage}
\hfill
\begin{minipage}[b]{0.48\linewidth}
  \centering
  \centerline{\includegraphics[width=4.0cm]{./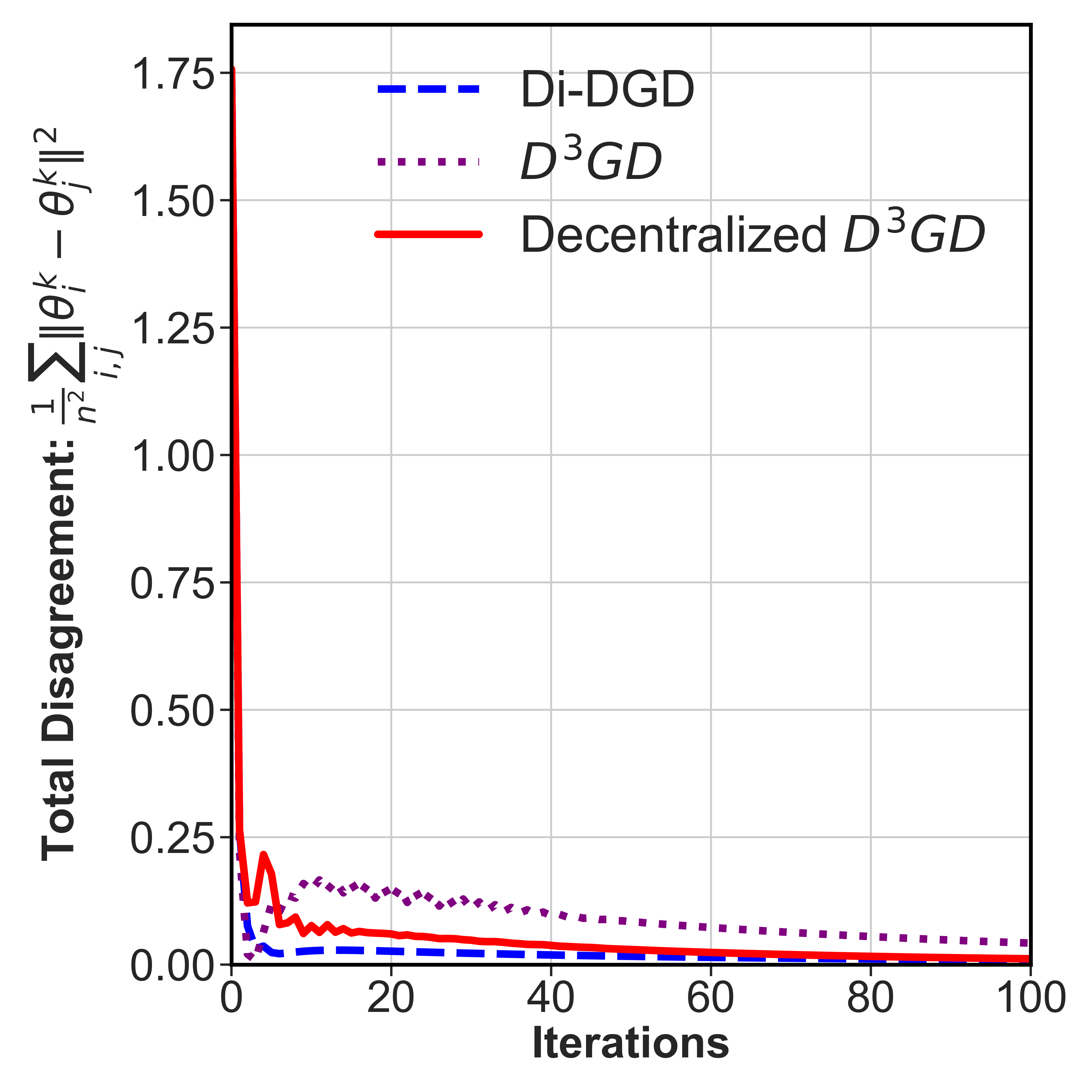}}
%  \vspace{1.5cm}
  
\end{minipage}
\caption{Convergence of {\algname} and Di-DGD on sigmoid loss minimization. (Left) Stationarity measure $\frac{1}{n} \sum_{i=1}^n \| \grd F( \prm_i^k ) \|^2$. (Right) Total disagreement $\frac{1}{n^2}\sum_{i,j} \| \prm_i^t - \prm_j^t \|^2$.}
\label{fig:strategy comparison}
\end{figure}

\begin{figure}[t] 
    \centering 

    \begin{subfigure}[b]{0.48\linewidth} 
        \centering
       
        \includegraphics[width=\textwidth]{./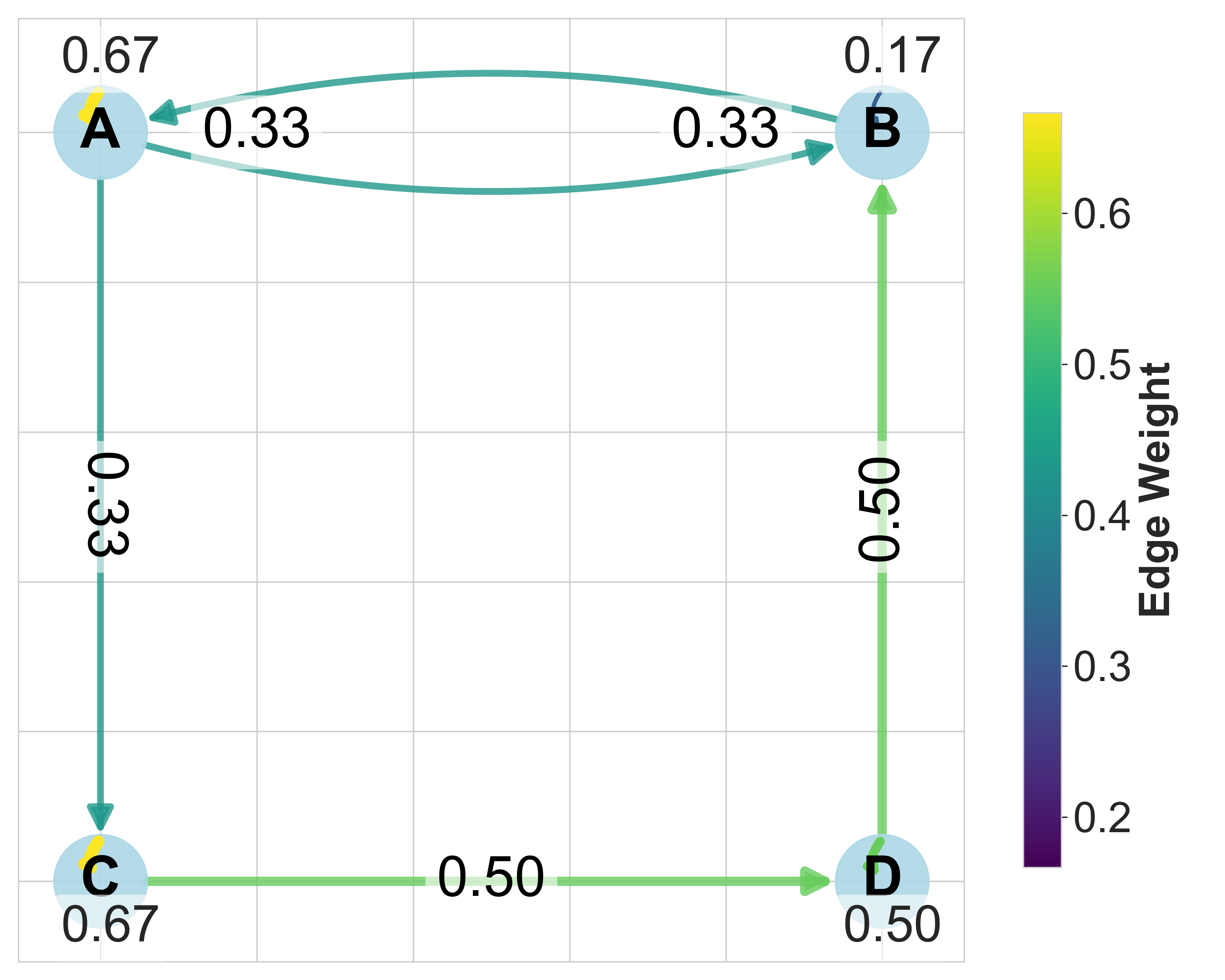}
        \caption{Edge weights of a directed ring topology with Metropolis weights.}
        \label{fig:initial_graph} 
    \end{subfigure}
    \hfill 
    \begin{subfigure}[b]{0.48\linewidth} % 使用 subfigure 环境
        \centering
        \includegraphics[width=\textwidth]{./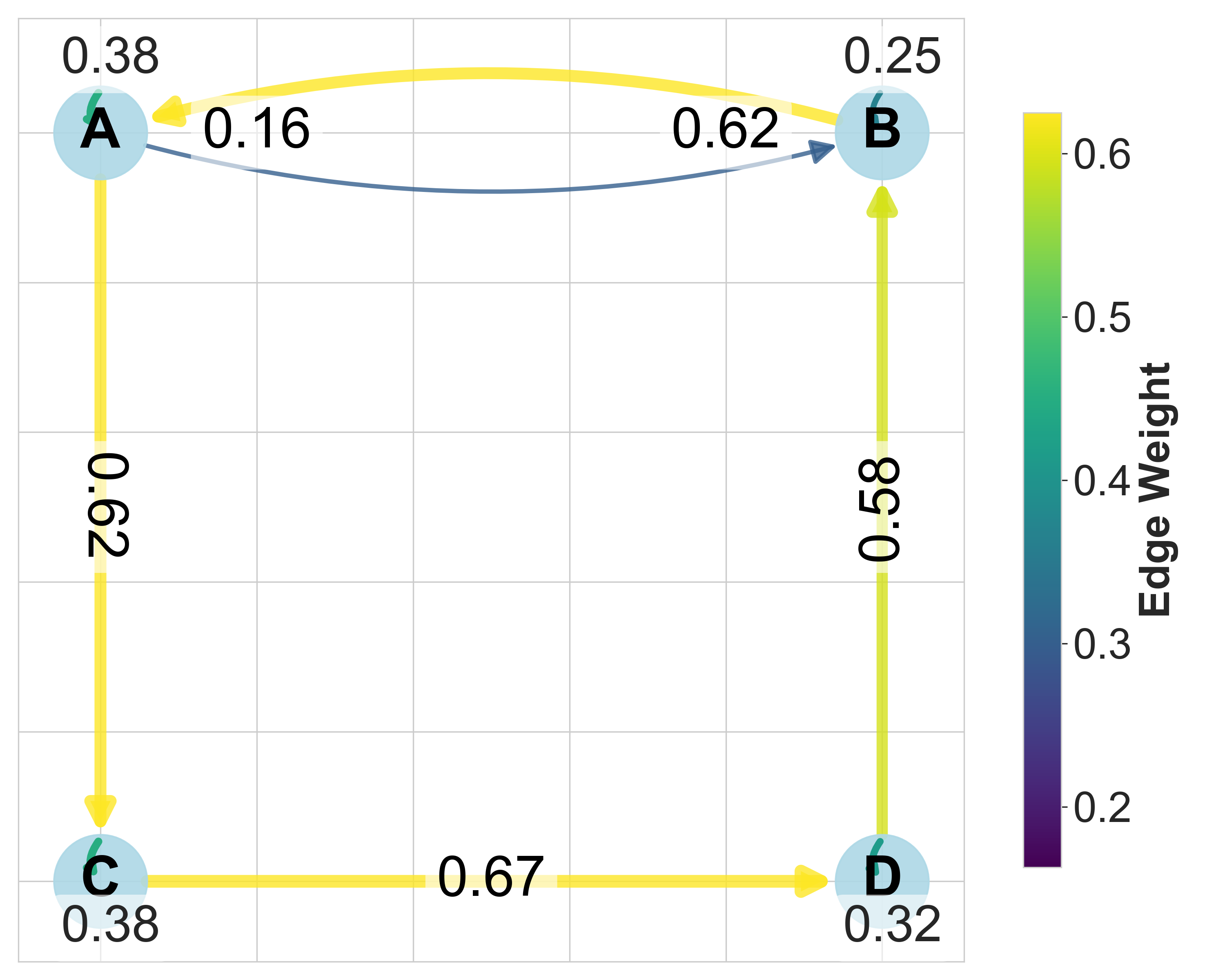}
        \caption{Edge weights obtained by {\algname} after 1000 iterations.}
        \label{fig:final_graph} 
    \end{subfigure}
    \caption{Visualizations of the edge weights.}
    \label{fig:graphs_comparison} 
\end{figure}

\vspace{.2cm}
\noindent{\bf Optimized Graph Topology.}
The second set of experiments delves into investigating the graph weights learnt by {\algname}. For illustration purpose, we concentrate on a simple directed graph of $n=4$ agents with the topology ${\cal G}$ in Fig.~\ref{fig:initial_graph} together with the initial edge weights. We designate agent {\sf A} as the outlier with heterogeneous data, while the data distributions of the other three agents are nearly homogeneous. This is achieved by setting the parameter in synthetic data generation $\alpha=0.1$  for agent {\sf A} and $\alpha=100$ for agents {\sf B, C, D}.

Fig.~\ref{fig:final_graph} shows the edge weights learnt by {\algname} after $T=1000$ iterations. 
Notice that the optimized edge weight has assigned increased weights for the edges of ${\sf A} \to {\sf C}$, ${\sf C} \to {\sf D}$, and ${\sf B} \to {\sf A}$. The result is reasonable since in order to achieve consensus, information from {\sf A} that is heterogeneous must be propagated to other agents. 
Obviously, {\sf B} and {\sf C} can receive information from {\sf A} directly, yet {\sf D} can only receive information from {\sf A} indirectly via {\sf C}. Therefore, it is anticipated that the information flow from {\sf A} to {\sf B} can be partially replaced by the information flow provided by {\sf D}. 
Subsequently, information transmission along the ${\sf A} \to {\sf C}$ and ${\sf C} \to {\sf D}$ paths becomes critical. 
This simple experiment allows us to observe the relationship between the underlying data distribution and the graph topology optimized by the proposed {\algname}. That is, the higher the data heterogeneity between a pair of agents, the higher their connecting edge weight tends to be. It confirms our conjecture that an optimized topology shall be data dependent

% \vspace{.2cm}\noindent
% {\bf Conclusion.}
\vspace{-.1cm}
\section{Conclusion}\vspace{-.1cm}
We have proposed {\algname}, a data-dependent algorithm that dynamically refines communication edge weights in decentralized learning on directed graphs. We design {\algname} from a Lyapunov analysis to guide the optimization of these weights and develop a fully decentralized implementation using locally available information. The numerical experiments show that {\algname} achieves speed-ups over Di-DGD. We finally reveal the potential relationship between the underlying data distribution and the graph optimized by {\algname}.

\newpage
\bibliographystyle{IEEEtran}
\bibliography{strings,refs}

\newpage

\onecolumn
\appendix

\section{Preliminary Lemmas}
\newcommand{\C}{{\mathbb{C}}}

\begin{lemma} \cite[Lemma 1]{Xi-row}
    \label{lemma 1}
    Under Assumption~\ref{assu:strong_connect} and consider the recursion for ${\bm Y}^k$ in \eqref{iter:y_k}. There exists a constant $\lambda \in [0,1)$, $\C < \infty$ such that for any $k \geq 0$,
    % Consider $Y_k$ , generated from the row stochastic matrix, $A$, then there exists a constant $0 < \lambda < 1$, and $0<C < \infty$, such that 
    \begin{equation*}
        \| {\bm Y}^k - {\bf 1} \bm{\pi}_{\bm A}^\top\|_2 \leq \C \lambda^k.
    \end{equation*}
\end{lemma}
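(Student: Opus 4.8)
\textbf{Proof plan for Lemma~\ref{lemma 1}.}
The statement asserts geometric convergence of the auxiliary iterate ${\bm Y}^k$ generated by the linear recursion \eqref{iter:y_k} toward the rank-one limit ${\bf 1}\bm{\pi}_{\bm A}^\top$. Since \eqref{iter:y_k} reads ${\bm Y}^{k+1} = {\bm A}{\bm Y}^k$ in stacked form (each row vector ${\bm y}_i$ being mixed by the $i$th row of ${\bm A}$), we immediately get the closed form ${\bm Y}^k = {\bm A}^k {\bm Y}^0$. The plan is therefore to transfer the known geometric decay of $({\bm A} - {\bf 1}\bm{\pi}_{\bm A}^\top)^k$ to ${\bm Y}^k - {\bf 1}\bm{\pi}_{\bm A}^\top$.

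First I would record the key algebraic identity: because ${\bm A}{\bf 1} = {\bf 1}$ and $\bm{\pi}_{\bm A}^\top{\bm A} = \bm{\pi}_{\bm A}^\top$, the matrix ${\bm B} := {\bm A} - {\bf 1}\bm{\pi}_{\bm A}^\top$ satisfies ${\bm B}^k = {\bm A}^k - {\bf 1}\bm{\pi}_{\bm A}^\top$ for all $k \ge 1$ (the cross terms telescope because ${\bf 1}\bm{\pi}_{\bm A}^\top$ is idempotent and commutes appropriately with ${\bm A}$ on both sides). Hence
\[
{\bm Y}^k - {\bf 1}\bm{\pi}_{\bm A}^\top {\bm Y}^0 = ({\bm A}^k - {\bf 1}\bm{\pi}_{\bm A}^\top){\bm Y}^0 = {\bm B}^k {\bm Y}^0.
\]
Next I would pin down the limit: since ${\bm Y}^0$ has nonnegative entries with each row summing appropriately (more precisely, one checks $\bm{\pi}_{\bm A}^\top {\bm Y}^0$ is invariant under the dynamics and, under the normalization/initialization convention that makes ${\bm y}_i^0$ the $i$th standard basis vector so ${\bm Y}^0 = {\bm I}$, we get $\bm{\pi}_{\bm A}^\top {\bm Y}^0 = \bm{\pi}_{\bm A}^\top$), the stated limit ${\bf 1}\bm{\pi}_{\bm A}^\top$ is correct; in general ${\bf 1}\bm{\pi}_{\bm A}^\top {\bm Y}^0$ is the limit and one absorbs $\|{\bm Y}^0\|$ into the constant $\C$.

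The remaining step is to bound $\|{\bm B}^k\|_2$ geometrically. Under Assumption~\ref{assu:strong_connect} and \eqref{eq:spectral}, the spectral radius of ${\bm B}$ is $1 - \rho_{\bm A} < 1$. Spectral radius strictly less than one does not by itself give $\|{\bm B}^k\|_2 \le (1-\rho_{\bm A})^k$ because ${\bm B}$ need not be normal (indeed ${\bm A}$ is generally asymmetric), but Gelfand's formula — or, more constructively, putting ${\bm B}$ in Jordan form ${\bm B} = {\bm S}{\bm J}{\bm S}^{-1}$ and bounding $\|{\bm J}^k\|$ by a polynomial-times-geometric factor — yields that for any $\lambda \in (1-\rho_{\bm A}, 1)$ there is a finite $\C_0$ with $\|{\bm B}^k\|_2 \le \C_0 \lambda^k$ for all $k$. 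Combining, $\|{\bm Y}^k - {\bf 1}\bm{\pi}_{\bm A}^\top\|_2 \le \C_0\|{\bm Y}^0\|_2\,\lambda^k =: \C\lambda^k$.

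The only genuine subtlety — and the place I would be most careful — is the non-normality of ${\bm B}$: one cannot write $\|{\bm B}^k\|_2 = (\rho({\bm B}))^k$, and the clean bound $\|{\bm B}\|_2 \le 1-\rho_{\bm A}$ suggested by \eqref{eq:spectral} is really a statement about the eigenvalue of largest modulus, not the operator norm. The fix is standard (choose $\lambda$ strictly between $\rho({\bm B})$ and $1$, pay a constant $\C$), and since the lemma already allows an unspecified constant $\C$ and an unspecified $\lambda \in [0,1)$, this costs nothing. Everything else is routine; indeed the result is quoted verbatim from \cite[Lemma~1]{Xi-row}, so for the appendix it would suffice to cite that reference and sketch the identity ${\bm Y}^k - {\bf 1}\bm{\pi}_{\bm A}^\top = ({\bm A} - {\bf 1}\bm{\pi}_{\bm A}^\top)^k {\bm Y}^0$ together with the Jordan-form bound above.
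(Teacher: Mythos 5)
Your proof is correct. There is nothing in the paper to compare it against: the lemma is imported verbatim from \cite[Lemma 1]{Xi-row} with no proof given in the appendix, and your route---the closed form ${\bm Y}^k = {\bm A}^k {\bm Y}^0$, the telescoping identity $({\bm A}-{\bf 1}\bm{\pi}_{\bm A}^\top)^k = {\bm A}^k - {\bf 1}\bm{\pi}_{\bm A}^\top$, and a Jordan-form/Gelfand bound to handle the non-normality of ${\bm A}-{\bf 1}\bm{\pi}_{\bm A}^\top$ (choosing $\lambda$ strictly between the spectral radius and $1$)---is exactly the standard Perron--Frobenius argument underlying the cited result, so citing \cite{Xi-row} plus your sketch is sufficient. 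Your caveat about initialization is also well taken: the stated limit ${\bf 1}\bm{\pi}_{\bm A}^\top$ requires $\bm{\pi}_{\bm A}^\top {\bm Y}^0 = \bm{\pi}_{\bm A}^\top$ (e.g., ${\bm Y}^0={\bm I}$, the convention of \cite{Xi-row} and of Algorithm~1), whereas for a general positive initialization, as allowed in the text around \eqref{strategy:1}, the limit is ${\bf 1}\bm{\pi}_{\bm A}^\top{\bm Y}^0$ and only the constant, not the limit, can be absorbed into $\C$.
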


\noindent
\begin{lemma}
\label{lemma 2}
Under Assumption~\ref{assu:strong_connect} and consider the recursion for ${\bm Y}^k$ in \eqref{iter:y_k}. There exists $C_0 < \infty$ such that we have the following upper bound
% The convergence rate of $(\tilde{Y}^k)^{-1} $ is bounded by the following inequality
\begin{equation*}
    \| (\tilde{\bm Y}^k)^{-1} \|_F^2  \leq \sum_{i=1}^n \frac{1}{\pi^{2}_{i, {\bm A}} } +C_0 \lambda^k ,
\end{equation*}
where the $\lambda \in [0,1)$ was the constant defined in Lemma \ref{lemma 1}.
\end{lemma}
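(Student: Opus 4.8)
The idea is to exploit that $\widetilde{\bm Y}^k = {\rm Diag}({\bm Y}^k)$ is diagonal, so $\| (\widetilde{\bm Y}^k)^{-1} \|_F^2 = \sum_{i=1}^n (y_{i,i}^k)^{-2}$, while $\sum_{i=1}^n \pi_{i,{\bm A}}^{-2}$ is the corresponding quantity for the limit ${\bf 1}\bm{\pi}_{\bm A}^\top$. It therefore suffices to prove a scalar estimate $(y_{i,i}^k)^{-2} - \pi_{i,{\bm A}}^{-2} \le c_i \lambda^k$ for each $i$ with a finite constant $c_i$, and then set $C_0 := \sum_{i=1}^n c_i$ (the target inequality is vacuous whenever the left-hand side is $\le 0$, so only this one-sided estimate is needed). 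First I would record the entrywise consequence of Lemma~\ref{lemma 1}: since $|M_{ii}| = |{\bm e}_i^\top M {\bm e}_i| \le \|M\|_2$ for any matrix $M$, applying this to $M = {\bm Y}^k - {\bf 1}\bm{\pi}_{\bm A}^\top$ (whose $(i,i)$ entry is $y_{i,i}^k - \pi_{i,{\bm A}}$) gives $|y_{i,i}^k - \pi_{i,{\bm A}}| \le \C\lambda^k$ for all $k$ and all $i$.

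Next I would establish uniform, two-sided control on $y_{i,i}^k$. For the upper bound, from \eqref{iter:y_k} one has ${\bm Y}^k = {\bm A}^k {\bm Y}^0$, and row-stochasticity of ${\bm A}$ keeps every entry of ${\bm Y}^k$ bounded by $\bar c := \max_{i,j}({\bm Y}^0)_{ij}$; in particular $y_{i,i}^k + \pi_{i,{\bm A}} \le \bar c + 1$ since $\pi_{i,{\bm A}} \le 1$. For the lower bound, the sequence $(y_{i,i}^k)_{k \ge 0}$ is strictly positive for every $k$ (by induction from \eqref{iter:y_k} using ${\bm y}_i^0 > {\bm 0}$ and $A_{ii} > 0$) and converges to $\pi_{i,{\bm A}} > 0$ by Lemma~\ref{lemma 1}, hence $\underline c := \min_i \inf_{k \ge 0} y_{i,i}^k > 0$. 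With $\pi_{\min} := \min_i \pi_{i,{\bm A}} > 0$, the identity $(y_{i,i}^k)^{-2} - \pi_{i,{\bm A}}^{-2} = (\pi_{i,{\bm A}} - y_{i,i}^k)(\pi_{i,{\bm A}} + y_{i,i}^k)/\big((y_{i,i}^k)^2 \pi_{i,{\bm A}}^2\big)$ then yields $(y_{i,i}^k)^{-2} - \pi_{i,{\bm A}}^{-2} \le \C(\bar c + 1)\lambda^k/(\underline c^2 \pi_{\min}^2)$, and summing over $i$ gives the claim with $C_0 = n\,\C(\bar c + 1)/(\underline c^2 \pi_{\min}^2)$.

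The only step requiring genuine care is the uniform-in-$k$ positive lower bound $\underline c$ on $y_{i,i}^k$: Lemma~\ref{lemma 1} only pins down $y_{i,i}^k$ for large $k$ (it forces $y_{i,i}^k \ge \pi_{i,{\bm A}}/2$ once $\C\lambda^k \le \pi_{i,{\bm A}}/2$), so it must be combined with the elementary fact that each of the remaining finitely many iterates is strictly positive; alternatively one may invoke primitivity of ${\bm A}$ under Assumption~\ref{assu:strong_connect} directly. All remaining manipulations are routine algebra.
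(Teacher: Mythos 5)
Your proof is correct and takes essentially the same route as the paper's: it uses the entrywise geometric convergence $|Y_{ii}^k-\pi_{i,{\bm A}}|\le \C\lambda^k$ from Lemma~\ref{lemma 1}, the difference-of-reciprocal-squares identity, uniform bounds on $Y_{ii}^k$ from above and away from zero, and a sum over $i$ to get $C_0$. If anything, you are slightly more careful than the paper, which simply asserts $Y_{ii}^k\le 1$ and implicitly that $\inf_{k,i}Y_{ii}^k>0$, whereas you justify the uniform lower bound via the tail estimate plus finitely many strictly positive iterates (and note the positivity induction needs only row-stochasticity and ${\bm y}_j^k>{\bm 0}$, not $A_{ii}>0$).
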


\begin{proof}
From Lemma \ref{lemma 1}, the diagonal entry $Y_{ii}^k$ converges geometrically to $\pi_{i, {\bm A}}$, i.e., $|Y_{ii}^k - \pi_{i, {\bm A}} | \le \C \lambda^k$. For any finite $k$, we define $Y_{min} = \inf_{k \geq 0,i} Y_{ii}^k $, and obviously $Y_{ii}^k \leq 1$. And we define $\pi_{min} = \inf_{i} \pi_i$; there is also $\pi_i < 1$.
\begin{equation}
    \left| \frac{1}{(Y_{ii}^k)^2} - \frac{1}{\pi_i^2} \right| = \left| \frac{(Y_{ii}^k+ \pi_i)(Y_{ii}^k- \pi_i)}{(Y_{ii}^k)^2(\pi_i)^2} \right| \leq \frac{|(Y_{ii}^k+ \pi_i)||(Y_{ii}^k- \pi_i)|}{(Y_{ii}^k)^2(\pi_i)^2} \leq \frac{2 \C \lambda^k}{Y_{min}^2\pi_{min}^2}.
\end{equation}
Thus
\begin{equation}
    \frac{1}{(Y_{ii}^k)^2} \leq \frac{1}{\pi_i^2} + \frac{2 \C \lambda^k}{Y_{min}^2\pi_{min}^2} ,
\end{equation}
and  we can bound $\| (\tilde{\bm Y}^k)^{-1} \|_F^2$ by
\begin{equation}
    \| (\tilde{\bm Y}^k)^{-1} \|_F^2 = \sum_{i=1}^n  \frac{1}{(Y_{ii}^k)^2} \leq \sum_{i=1}^n \frac{1}{\pi_i^2} + C_0 \lambda^k, 
\end{equation}
where $C_0 = \sum_{i=1}^n  \frac{2 \C \lambda^k}{Y_{min}^2\pi_{min}^2}$.

% Subsequently, we can derive
% \begin{align*}
% \left(\frac{1}{Y_{ii}^k}\right)^2 &= \left(\frac{1}{\pi_i + (Y_{ii}^k - \pi_i)}\right)^2 = \frac{1}{\pi_i^2}\left(1 + \frac{Y_{ii}^k - \pi_i}{\pi_i}\right)^{-2} \\
% &= \frac{1}{\pi_i^2} \left(1 - 2\frac{Y_{ii}^k - \pi_i}{\pi_i} + \mathcal{O}\left(\left(\frac{Y_{ii}^k - \pi_i}{\pi_i}\right)^2\right)\right) \\
% &\leq \frac{1}{\pi_i^2} + \mathcal{O}(\lambda^k)
% \end{align*}
% where the inequality uses the Bernoulli inequality. 

% Summing over $i$ gives $||(\tilde{\bm Y}^k)^{-1}||_F^2 = \sum_{i=1}^n (\frac{1}{Y_{ii}^k})^2 \leq \sum_{i=1}^n \frac{1}{\pi_{i,{\bm A}}^2} + C_0 \lambda^k$. The $\mathcal{O}(\lambda^k)$ term can be written as $C_0 \lambda^k$ for appropriate constants.

\end{proof}

\section{Proof of Theorem~2.5}
Under Assumption \ref{assu:L_Lip} and using the update formula $\Hprm^{k+1} = \Hprm^k - \gamma_k \sum_{i=1}^n \frac{\pi_i}{nY_{ii}^k}\grd f_i(\prm_i^k)$, we have:
\begin{align}
\label{appendix eq : taylor 1}
F(\Hprm^{k+1}) &\le F(\Hprm^k) + \langle \grd F(\Hprm^k), \hat{\prm}^{k+1} - \Hprm^k \rangle + \frac{L}{2}||\Hprm^{k+1} - \hat{\prm}^k||^2  \notag \\
&\le F(\Hprm^k) -\gamma_k \left\langle \grd F(\hat{\prm}^k), \sum_{i=1}^n \frac{\pi_i}{nY_{ii}^k}\grd f_i(\prm_i^k) \right\rangle + \frac{L\gamma_k^2}{2} \left||\sum_{i=1}^n \frac{\pi_i}{nY_{ii}^k}\grd f_i(\prm_i^k) \right||^2
\end{align}
Notice that as we have defined $\Hprm^k := \sum_{i=1}^n \pi_i \prm_i^k$, we observe
\begin{align*}
    \left \langle \nabla F(\Hprm^k) , \sum_{i=1}^n \frac{\pi_i}{Y_{ii}^k \, n} \cdot \nabla f_i(\prm_i^k)  \right \rangle &= \left \langle \nabla F(\Hprm^k) , \sum_{i=1}^n \frac{\pi_i}{Y_{ii}^k \, n} \cdot \nabla f_i(\prm_i^k) - \nabla F( \Hprm^k ) + \nabla F( \Hprm^k ) \right \rangle \\
    & \geq \| \nabla F(\Hprm^k) \|^2 - \frac{1}{2} \| \nabla F(\Hprm^k) \|^2 - \frac{1}{2} \|  \sum_{i=1}^n \frac{\pi_i}{Y_{ii}^k \, n} \, \nabla f_i(\prm_i^k) - \nabla F(\Hprm^k) \|^2,
\end{align*}
and
\begin{align*}
    \left\| \sum_{i=1}^n \frac{\pi_i}{Y_{ii}^k \, n} \cdot \nabla f_i(\prm_i^k) \right\|^2 & \leq
    2 \| \nabla F(\Hprm^k) \|^2 + 2 \left\| \sum_{i=1}^n \frac{\pi_i}{Y_{ii}^k \, n} \cdot \nabla f_i(\prm_i^k) - \nabla F(\Hprm^k) \right\|^2 .
    % \\
    % & = 2 \|  \nabla F(\Hprm^k) \|^2 + 2 \left\| \sum_{i=1}^n \frac{\pi_i}{Y_{ii}^k \, n} \cdot \nabla f_i(\prm_i^k) - \nabla F(\Hprm^k) \right\|^2
\end{align*}
The second term in the above can be controlled by the weighted consensus error as
\begin{align*}
    \| \sum_{i=1}^n \frac{\pi_i}{Y_{ii}^k \, n} \cdot \nabla f_i(\prm_i^k) - \nabla F(\Hprm^k) \|^2 &= \| \sum_{i=1}^n (\frac{\pi_i}{Y_{ii}^k \, n }+\frac{1}{n} - \frac{1}{n}) \cdot \nabla f_i(\prm_i^k) - \nabla F(\Hprm^k) \|^2 \\
    % & = \| \sum_{i=1}^n (\frac{\pi_i}{Y_{ii}^k \, n} - \frac{1}{n}) \cdot \nabla f_i(\prm_i^k) + \frac{1}{n} \sum_{i=1}^n \big( \nabla f_i(\prm_i^k) - \nabla f_i(\hat{\prm^k}) \big) \|^2 \\
    & \leq 2n \sum_{i=1}^n \| (\frac{\pi_i}{Y_{ii}^k \, n } - \frac{1}{n}) \cdot \nabla f_i(\prm_i^k) \|^2 + 2n \cdot \frac{1}{n^2} \sum_{i=1}^n \| \nabla f_i(\prm_i^k) - \nabla f_i(\hat{\prm^k})  \|^2 \\
    & \leq \frac{2}{n}  \sum_{i=1}^n \left(\frac{ \C \lambda^k}{Y_{ii}^k }\right)^2 \|\nabla f_i(\prm_i^k) \|^2 + \frac{ 2L^2 }{n} \sum_{i=1}^n \| \prm_i^k - \Hprm^k \|^2
\end{align*}
the last inequality follows from Lemma \ref{lemma 1}.
Substituting back into \eqref{appendix eq : taylor 1} gives 
  \begin{align}
        \label{eq:recursive F}
         F(\Hprm^{k+1}) - F(\Hprm^k) 
         % &\leq ( - \frac{1}{2}\gamma_k + \gamma_k^2 L) \| \nabla F(\Hprm^k) \|^2 + (\gamma_k^2 L + \frac{\gamma_k}{2}) \| \sum_{i=1}^n \frac{\pi_i}{Y_{ii}^k \, n} \cdot \nabla f_i(\prm_i^k) - \nabla F(\Hprm^k) \|^2  \notag \\
         & \leq ( - \frac{1}{2}\gamma_k + \gamma_k^2 L) \| \nabla F(\Hprm^k) \|^2 + (\gamma_k^2 L + \frac{\gamma_k}{2}) (\frac{2}{n}) \sum_{i=1}^n \left(\frac{C\lambda^k}{Y_{ii}^k } \right)^2 \|\nabla f_i(\prm_i^k) \|^2   \notag \\ 
         & \quad + (\gamma_k^2 L + \frac{\gamma_k}{2}) \frac{2L^2}{n} \sum_{i=1}^n  \| \prm_i^k - \Hprm^k \|^2 .
    \end{align}
By setting $\gamma_k \leq 1/(4L)$, we obtain
\beq \label{eq:recur_F_new}
F(\Hprm^{k+1}) - F(\Hprm^k) \leq - \frac{\gamma_k}{4} \| \grd F( \Hprm^k ) \|^2 + \frac{ 3 \gamma_k }{2n} \sum_{i=1}^n \left(\frac{C\lambda^k}{Y_{ii}^k } \right)^2 \|\nabla f_i(\prm_i^k) \|^2 + \frac{ 3 \gamma_k L^2}{2} {\textstyle \frac{1}{n}} \| \Prm^k - \HPrm^k \|_F^2.
\eeq 

Our next endeavor is to control the last term of consensus error. 
To do so, we aim to derive a recursion for $\frac{1}{n} \| \Prm^{k} - \HPrm^{k} \|^2_F$. Observe that
% With the assumption {\bf to be ref}, we handle the term $\sum_{i=1}^n  \| \prm_i^k - \Hprm^k \|^2 $ as follows in matrix form:
\begin{align*}
    \frac{1}{n} \| \Prm^{k+1} - \HPrm^{k+1} \|^2_F &= \frac{1}{n} \| ({\bm I} - {\bf 1} \bm{\pi}^\top ) \Prm^{k+1} \|_F^2 \\
    & = \frac{1}{n} \| ({\bm I} - {\bf 1} \bm{\pi}^\top ) ( {\bm A} \Prm^k - \gamma_k (n \tilde{\bm Y}^k)^{-1} \nabla {\bm F}^k)  \|_F^2 \\
    &= \frac{1}{n} \| ({\bm A} - {\bf 1} \bm{\pi}^\top ) \Prm^k - \frac{\gamma_k}{n}   ({\bm I} - {\bf 1} \bm{\pi}^\top ) (\tilde{\bm Y}^k)^{-1} \nabla {\bm F}^k \|_F^2 \\
    & \leq \frac{1+\alpha}{n} \| ({\bm A} - {\bf 1}_n \bm{\pi}^\top ) \Prm^k \|_F^2 + \frac{\gamma_k^2}{n^3} (1+ \frac{1}{\alpha}) \| ({\bm I} - {\bf 1} \bm{\pi}^\top ) (\tilde{\bm Y}^k)^{-1} \nabla {\bm F}^k  \|_F^2, 
\end{align*}
where the second inequality follows from that $\bm{\pi}$ is a left eigenvector of ${\bm A}$ such that $\bm{\pi}^\top {\bm A} = \bm{\pi}^\top$, and the last inequality follows from the Young's inequality, and $\alpha >0$ is a positive number to be fixed later.
As ${\bm A}$ is row-stochastic and $( {\bm A} - {\bf 1} \bm{\pi}^\top ) {\bf 1} = 0 $, we have
\begin{align*}
    \| ( {\bm A} - {\bf 1} \bm{\pi}^\top ) \Prm^k \|_F^2 &=   \| ( {\bm A} - {\bf 1} \bm{\pi}^\top ) ( {\bm I} - {\bf 1} \bm{\pi}^\top) \Prm^k  \|_F^2 \\
    & \leq \| ( {\bm A} - {\bf 1} \bm{\pi}^\top ) \|_F^2  \cdot \| ( {\bm I} - {\bf 1} \bm{\pi}^\top) \Prm^k \|_F^2 \\
    & = (1-\rho_{\bm A})^2 \| \Prm^k - \HPrm^k \|_F^2
\end{align*}
The last inequality is due to \eqref{eq:spectral}. For the second term, we notice that
% \htwai{There is a bug here. For the first equality to hold, you need $({\bm I} - {\bf 1} \bm{\pi}^\top )(\tilde{\bm Y}^k)^{-1} {\bf 1} = {\bm 0}$. I am not sure if we can have it even when $\tilde{\bm Y}^k = {\rm Diag}( \bm{\pi} )$ in the converged case.}
% \begin{align*}
%     \| ({\bm I} - {\bf 1} \bm{\pi}^\top ) (\tilde{\bm Y}^k)^{-1} \nabla {\bm F}^k  \|_F^2 &= \| ({\bm I} - {\bf 1} \bm{\pi}^\top )(\tilde{\bm Y}^k)^{-1} \left(\nabla {\bm F}^k - {\bf 1}_n \nabla F(\Hprm^k )^\top \right)  \|_F^2 \\
%     & \leq C_{\pi 1} \| (\tilde{Y}^k)^{-1} \|_F^2 \cdot \sum_{i=1}^n \| \nabla f_i(\prm_i^k) - \frac{1}{n} \sum_{j=1}^n \nabla f_j  (\Hprm^k) \|_F^2 \\
%     & \leq 2 C_{\pi 1} (C_{\pi 2} + C_0 \lambda^k ) \cdot(n\varsigma^2+L^2\|\Prm^k - \Hprm^k \|_F^2)
% \end{align*}
% \htwai{We can fix it (A) using the bounded gradient, i.e., $G$ assumption and bound this term using $G$, but you will lose the relation to $\varsigma$, or (B -- preferred) we replace it by
\begin{align*}
    \| ({\bm I} - {\bf 1} \bm{\pi}^\top ) (\tilde{\bm Y}^k)^{-1} \nabla {\bm F}^k  \|_F^2 & \leq 2 \| ({\bm I} - {\bf 1} \bm{\pi}^\top )(\tilde{\bm Y}^k)^{-1} \left(\nabla {\bm F}^k - {\bf 1} \nabla F(\Hprm^k )^\top \right)  \|_F^2 + 2 \| ({\bm I} - {\bf 1} \bm{\pi}^\top )(\tilde{\bm Y}^k)^{-1} {\bf 1} \nabla F(\Hprm^k )^\top \|_F^2 
\end{align*}
We observe that 
\[
\| ({\bm I} - {\bf 1} \bm{\pi}^\top )(\tilde{\bm Y}^k)^{-1} {\bf 1} \nabla F(\Hprm^k )^\top \|_F^2 = \| ({\bm I} - {\bf 1} \bm{\pi}^\top )(\tilde{\bm Y}^k)^{-1} {\bf 1}\|^2 \| \nabla F(\Hprm^k) \|^2 \leq \left( \sum_{i=1}^n \frac{1}{\pi^{2}_{i, {\bm A}} } +C_0 \lambda^k \right) \| \nabla F(\Hprm^k) \|^2,
\]
where we have applied Lemma~\ref{lemma 2} in the last inequality. 
Moreover, using Assumptions \ref{assu:L_Lip} and \ref{assu:data_hetero},
\begin{align*}
\| ({\bm I} - {\bf 1} \bm{\pi}^\top )(\tilde{\bm Y}^k)^{-1} \left(\nabla {\bm F}^k - {\bf 1} \nabla F(\Hprm^k )^\top \right)  \|_F^2 & \leq C_{\pi 1} \| (\tilde{\bm Y}^k)^{-1} \|_F^2 \cdot \sum_{i=1}^n \left\| \nabla f_i(\prm_i^k) - \frac{1}{n} \sum_{j=1}^n \nabla f_j  (\Hprm^k) \right\|_F^2 \\
& \leq 2 C_{\pi 1} (C_{\pi 2} + C_0 \lambda^k ) \cdot(n\varsigma^2+L^2\|\Prm^k - \HPrm^k \|_F^2)
\end{align*}
where we have defined the constant $C_{\pi 1} = \sum_{i=1}^n (1-\pi_i)^2 + (n-1) \pi_i^2 $, $C_{\pi 2} = \sum_{i=1}^n \frac{1}{\pi^{2}_i}$. We therefore conclude that 
\begin{align*}
\| ({\bm I} - {\bf 1} \bm{\pi}^\top ) (\tilde{\bm Y}^k)^{-1} \nabla {\bm F}^k  \|_F^2 & \leq 4 C_{\pi 1} (C_{\pi 2} + C_0 \lambda^k ) \cdot(n\varsigma^2+L^2\|\Prm^k - \HPrm^k \|_F^2) + 2 \left( C_{\pi2} +C_0 \lambda^k \right) \| \nabla F(\Hprm^k) \|^2.
\end{align*}
% }
% the constant $C_{\pi 1} = \sum_{i=1}^n (1-\pi_i)^2 + (n-1) \pi_i^2 $, $C_{\pi 2} = \sum_{i=1}^n \frac{1}{\pi^{2}_i}$. When dealt with the doubly stochastic matrix, there will be $\pi_i = \frac{1}{n}$, such that $C_{\pi 1 } = n-1$. The second inequality comes from the Lemma \ref{lemma 2} and assumptions \ref{assu:data_hetero}, \ref{assu:L_Lip}.
We now set $\alpha = \frac{\rho_{\bm A}}{1-\rho_{\bm A}}$ and consider the case with $\gamma_k \leq \frac{n\rho_{\bm A}}{4L\sqrt{C_{\pi1}(C_{\pi2}+C_0)}}$, we arrive at the following recursion
\begin{equation}
\label{appendix eq:consensus recursion}
{\textstyle \frac{1}{n}} \| \Prm^{k+1} - \hat{\Prm}^{k+1} \|^2_F \leq (1-\frac{\rho_{\bm A}}{2}) {\textstyle \frac{1}{n}} \| \Prm^{k} - \hat{\Prm}^{k} \|^2_F + \left( \frac{4(C_{\pi 1}(C_{\pi 2}+C_0\lambda^k))}{n^2\rho_{\bm A}} \right)\gamma_k^2\varsigma^2 + \frac{2 \gamma_k^2 }{n^3 \rho_{\bm A}} \left( C_{\pi2} +C_0 \lambda^k \right) \| \nabla F(\Hprm^k) \|^2
\end{equation}
% Set $\gamma_k + \frac{4 \gamma_k L^2 }{n^3 \rho_{\bm A}} \left( \gamma_k^2 L + \frac{\gamma_k}{2} \right) \leq \frac{1}{4L}$ \htwai{please try to simplify this}, substituting the above into \eqref{eq:recursive F} is now

% Set $\gamma_k \leq \frac{1}{4L}$
% \begin{equation}
%     \frac{1}{4}\gamma_k \| \nabla F(\Hprm^k) \|^2  \leq  F(\Hprm^k)- F(\hat{\prm}^{k+1})   + \frac{3\gamma_k}{2n}  \sum_{i=1}^n \left(\frac{C\lambda^k}{Y_{ii}^k } \right)^2 \cdot G^2  
%          + \frac{3\gamma_kL^2}{2}\cdot ConErr(k)
% \end{equation}
% \htwai{Since the consensus error is related to $\| \nabla F(\Hprm^k) \|^2$ now, it is no longer convenient to work directly with $\frac{1}{n} \sum_{i=1}^n \| \nabla F(\prm_i^k) \|^2$ by separately bounding the consensus error like what you did in (23). I suggest to rework the proof by 
% \begin{itemize} 
% \item (A) use an appropriate Lyapunov function (of the form $F(\Hprm^k) + {\tt c} \| \Prm^t - \HPrm^t \|_F^2$ for some ${\tt c} > 0$, like before) and to extract a bound for $\sum_{k=0}^{T-1} ( \| \grd F( \Hprm^k ) \|^2 + {\tt c}' \| \Prm^k - \HPrm^k \|_F^2 )$. 
% \item (B) Fit the term extracted in (A) using your (21) below.
% \end{itemize}}

To bound the coupled evolution of $F(\Hprm^k)$ and $\| \Prm^t - \HPrm^t \|_F^2$, we consider the following Lyapunov function:
\begin{equation}
    \label{appendix eq: Lyapunov}
    L_k = F(\Hprm^k) + \frac{10\gamma_kL^2}{3n\rho_{\bm A}} \| \Prm^k - \HPrm^k \|^2_F .
\end{equation}
Using \eqref{eq:recur_F_new} and \eqref{appendix eq:consensus recursion}, the recursive equation of $L_k$ can be formulated as 
\begin{align*}
    L_{k+1} - L_k &= F(\Hprm^{k+1}) - F(\Hprm^k) + \frac{10\gamma_kL^2}{3n\rho_{\bm A}} \left( \| \Prm^{k+1} - \HPrm^{k+1} \|^2_F -\| \Prm^k - \HPrm^k \|^2_F\right) \\
    & \leq \left( \frac{-\gamma_k}{4} + \frac{20(C_{\pi2} + C_0\lambda^k)\gamma_k^3}{3n^3\rho^2_{\bm A}} \right) \| \nabla F(\Hprm^k) \|^2 - \frac{L^2}{6n}  \| \Prm^k - \HPrm^k \|^2_F + \left( \frac{40L^2(C_{\pi 1}(C_{\pi 2}+C_0\lambda^k))}{3n^2\rho^2_{\bm A}} \right)\gamma_k^3\varsigma^2 + \mathcal{O}(\gamma_k\lambda^{2k})
\end{align*}
Setting $\gamma_k^2 \leq n^3\rho^2_{\bm A}/ 80(C_{\pi 2}+\frac{C_0}{1-\lambda})$ and rearranging terms yield the following bound:
\begin{equation}
\label{eq: recursive of hat F and consensus}
    \gamma_k \left( \| \nabla F(\Hprm^k) \|^2 + \frac{L^2}{n} \| \Prm^k - \HPrm^k \|^2_F \right) \leq 6(L_k - L_{k+1}) + \left(\frac{80L^2(C_{\pi 1}(C_{\pi 2}+C_0\lambda^k))}{n^2\rho^2_{\bm A}} \right)\varsigma^2 \gamma_k^3 + \mathcal{O}(\gamma_k\lambda^{2k})
\end{equation}

Finally, we observe that for any $i = 1,\ldots,n$, we have
\begin{align}
   \| \nabla F(\prm_i^k) \|^2 & = \|\left( \grd F(\prm_i^k) - \grd F(\Hprm^k) \right) + \grd F(\Hprm^k) \|^2 \notag \\
   & \leq 2\| \nabla F(\Hprm^k)\|^2 + 2L^2 \| \prm_i^k - \Hprm^k\|^2
\end{align}
Together with \eqref{eq: recursive of hat F and consensus}, we observe that 
\begin{equation}
\label{appendix eq: stationary measure recursion}
    \gamma_k  \frac{1}{n} \sum_{i=1}^n \| \nabla F(\prm_i^k) \|^2 \leq 12(L_k - L_{k+1}) + \left(\frac{160L^2(C_{\pi 1}(C_{\pi 2}+C_0\frac{1}{1-\lambda}))}{n^2\rho^2_{\bm A}} \right)\varsigma^2 \gamma_k^3 + \mathcal{O}(\gamma_k\lambda^{2k})
\end{equation}
Summing from $k=0$ to $k=T-1$ for both sides of the equation \eqref{appendix eq: stationary measure recursion} yields
\begin{equation}
    \sum_{k=0}^{T-1} \gamma_k  \frac{1}{n} \sum_{i=1}^n \| \nabla F(\prm_i^k) \|^2 \leq 12(L_0 - L_{T}) + \left(\frac{160L^2(C_{\pi 1}(C_{\pi 2}+C_0 \frac{1}{1-\lambda}))}{n^2\rho^2_{\bm A}} \right)\varsigma^2 \sum_{k=0}^{T-1} \gamma_k^3 + \sum_{k=0}^{T-1}\mathcal{O}(\gamma_k\lambda^{2k})
\end{equation}
Note that $\sum_{k=0}^{T-1}\mathcal{O}(\gamma_k\lambda^{2k}) = {\cal O}( \gamma_T )$, dividing the inequality by $\sum_{k=0}^{T-1}\gamma_k$ and taking the minimum yields the desired result:
\[
{
\min_{k=0,...,T-1}\frac{1}{n}\sum_{i=1}^{n}||\grd F(\prm_{i}^{k})||^{2} = \mathcal{O}\left(\frac{F(\hat{\prm}^{0})-F(\hat{\prm}^{T})+\frac{\varsigma^{2}C_{\pi1}C_{\pi2}}{n^{2}\rho_{\bm A}^{2}}\sum_{k=0}^{T-1}\gamma_{k}^{3}}{\sum_{k=0}^{T-1}\gamma_{k}}\right)
} \]

% We use $F(\Hprm^0) - F(\Hprm^T)$ rather than $L_0 - L_T$ to explicitly show the convergence of the stationary measure, which does not influence the order of convergence rate. 

\section{Proof of Eq.~(8)}
We recall that:
\begin{equation*}
    L_k^c = F(\Hprm^k) + \frac{3\gamma_kL^2}{n\rho_{\bm A}} \| \Prm^k - \HPrm^k \|^2_F 
\end{equation*}
which yields
\begin{equation}
    L^c_{k+1} - L^c_k = F(\Hprm^{k+1}) - F(\Hprm^k) + \frac{3\gamma_kL^2}{n\rho_{\bm A}} \left( \| \Prm^{k+1} - \HPrm^{k+1} \|^2_F -\| \Prm^k - \HPrm^k \|^2_F\right) .
\end{equation}
Applying the recursive relation of $F(\Hprm^k)$ from \eqref{eq:recur_F_new} leads to
\begin{equation}
    L^c_{k+1}  \leq L^c_k - \frac{\gamma_k}{4} \| \grd F( \Hprm^k ) \|^2 - \frac{3 \gamma_k L^2 (2-\rho)}{n \, \rho} \| \Prm^k - \HPrm^k \|_F^2 \notag 
 \quad + \frac{3 \gamma_k L^2}{n \, \rho_{\bm A}} \| \Prm^{k+1} - \HPrm^{k+1} \|_F^2 + {\cal O}(  \gamma_k \lambda^{2k} ).
\end{equation}
Noting that ${\cal O}(  \gamma_k \lambda^{2k} ) = {\cal O}(\gamma_k^3)$ leads to the conclusion.

\end{document}